\newcommand*{\barfix}[2][.175ex]{%
  \mathpalette{\@barfix{#1}}{#2}%
}
\newcommand*{\@barfix}[3]{%
  % #1: space
  % #2: math style
  % #3: symbol
  \vbox{%
    \kern#1\relax
    \hbox{$#2#3\m@th$}%
  }%
}
\newtheorem{theorem}{Theorem}
\newtheorem{thm}{Theorem}[section]
\newtheorem{lemma}[thm]{Lemma}
\newtheorem{question}[thm]{Question}
\newcommand{\footremember}[2]{%
    \footnote{#2}
    \newcounter{#1}
    \setcounter{#1}{\value{footnote}}%
}
\newcommand{\footrecall}[1]{%
    \footnotemark[\value{#1}]%
} 
\title{\vspace{-2em}Percolation on high-dimensional product graphs}
\author{%
Sahar Diskin \footremember{alley}{School of Mathematical Sciences, Tel Aviv University, Tel Aviv 6997801, Israel. Emails: sahardiskin@mail.tau.ac.il, krivelev@tauex.tau.ac.il.}%
\and Joshua Erde \footremember{trailer}{Institute of Discrete Mathematics, Graz University of Technology, Steyergasse 30, 8010 Graz, Austria. Emails: erde@math.tugraz.at, kang@math.tugraz.at.}%
\and Mihyun Kang \footrecall{trailer}%
\and Michael Krivelevich \footrecall{alley}%
}
\begin{document}
\maketitle
\vspace{-2em}
\begin{abstract}
We consider percolation on high-dimensional product graphs, where the base graphs are regular and of bounded order. In the subcritical regime, we show that typically the largest component is of order logarithmic in the number of vertices. In the supercritical regime, our main result recovers the sharp asymptotic of the order of the largest component, and shows that all the other components are typically of order logarithmic in the number of vertices. In particular, we show that this phase transition is \textit{quantitatively} similar to the one of the binomial random graph.

This generalises the results of Ajtai, Koml\'os, and Szemer\'edi \cite{AKS81} and of Bollob\'as, Kohayakawa, and \L{u}czak \cite{BKL92} who showed that the $d$-dimensional hypercube, which is the $d$-fold Cartesian product of an edge, undergoes a phase transition quantitatively similar to the one of the binomial random graph.
\end{abstract}

\section{Introduction}
\subsection{Background and motivation}
In 1960, Erd\H{o}s and R\'enyi \cite{ER60} discovered the following fundamental phenomenon: the component structure of the binomial random graph $G(d+1,p)$\footnote{As we mainly consider $d$-regular graphs, we use the somewhat unusual notation of $G(d+1,p)$ instead of $G(n,p)$, to make the comparison of the results simpler.} undergoes a remarkable \emph{phase transition} around the probability $p=\frac{1}{d}$. More precisely, if we let $y=y(\epsilon)$ be the unique solution in $(0,1)$ of the equation
\begin{align}\label{survival prob}
    y=1-\exp\left(-(1+\epsilon)y\right),
\end{align}
then Erd\H{o}s and R\'enyi's work \cite{ER60} implies\footnote{In fact, Erd\H{o}s and R\'enyi worked in the closely related \emph{uniform} random graph model $G(d+1,m)$.} the following. Let $\epsilon>0$ be a small enough constant. If $p=\frac{1-\epsilon}{d}$, then with probability tending to one as $d$ tends to infinity, all the components of $G(d+1,p)$ are of order $O_{\epsilon}(\log d)$. On the other hand, if $p=\frac{1+\epsilon}{d}$, then with probability tending to one as $d$ tends to infinity, $G(d+1,p)$ contains a unique giant component of order $(1+o(1))yd$, where $y$ is defined according to \eqref{survival prob}, and all the other components of $G(d+1,p)$ are of order $O_{\epsilon}(\log d)$. We note that $y$ is the survival probability of a Galton-Watson tree with offspring distribution $Bin\left(d, \frac{1+\epsilon}{d}\right)$, and we have that $y=2\epsilon-O(\epsilon^2)$. The regime where $p=\frac{1-\epsilon}{d}$ is often referred to as the \textit{subcritical regime}, while if $p=\frac{1+\epsilon}{d}$ it is often called the \textit{supercritical regime}. We refer the reader to \cite{B01, FK16, JLr00} for a systematic coverage of random graphs.

We can think of the binomial random graph as perhaps the simplest example of a \emph{percolation} model. Percolation is a mathematical process, initially studied by Broadbent and Hammersley \cite{BH57} to model the flow of a fluid through a porous medium whose channels may be randomly blocked. The underlying mathematical model is simple: given a fixed \emph{host} graph $G$ and some probability $p \in (0,1)$, we consider the random subgraph $G_p$ of $G$ obtained by retaining every edge independently with probability $p$. The component structure of the percolated subgraph $G_p$ is of particular interest, and in this broader setting the phase transition described in $G(d+1,p)$ can be viewed as an example of a \emph{percolation threshold}. See \cite{BR06, G99, K82} for a comprehensive introduction to percolation theory. 

Percolation is often studied on lattice-like graphs, where in contrast to $G(d+1,p)$ there is some non-trivial underlying geometry controlling the potential adjacencies in the random subgraph. One particular percolation model that has received considerable interest is that of percolation on the $d$\textit{-dimensional hypercube} $Q^d$ --- the graph with the vertex set $V(Q^d)=\{0,1\}^d$, where two vertices are adjacent if they differ in exactly one coordinate.
It was conjectured by Erd\H{o}s and Spencer \cite{ES79} that $Q^d_p$ undergoes a similar phase transition to $G(d+1,p)$ when $p$ is around $\frac{1}{d}$. This conjecture has been confirmed by Ajtai, Koml\'os, and Szemer\'edi \cite{AKS81}, with subsequent work by Bollob\'as, Kohayakawa, and \L{}uczak \cite{BKL92}. In fact, \cite{AKS81, BKL92} showed that the two models undergo \textit{quantitatively} similar phase transition --- if $p=\frac{1-\epsilon}{d}$, for $\epsilon>0$ a small constant, then with probability tending to one as $d$ tends to infinity all the components in $Q^d_p$ are of logarithmic order; and, if $p=\frac{1+\epsilon}{d}$ then with probability tending to one as $d$ tends to infinity there exists a unique giant component in $Q^d_p$ whose order is $(1+o(1))y|V(Q^d)|$, where $y$ is defined according to \eqref{survival prob}, and all other components of $Q^d_p$ are of logarithmic order. A similar phenomenon has been shown in other random graph models, such as graphs with a fixed degree sequence \cite{MR95} and percolation on pseudo-random graphs \cite{FKM04}. We will informally refer to this as the \textit{Erd\H{o}s-R\'enyi component phenomenon}.

It is thus a natural question to ask whether such a phenomenon holds for percolation in a wider family of graphs. The $d$-dimensional hypercube embeds naturally into $d$-dimensional space, but we can also view $Q^d$ as a high-dimensional object in terms of its product structure, since $Q^d$ can be obtained as the \textit{Cartesian product} of $d$ copies of a single edge. 

Given an integer $t>0$ and a sequence of graphs $\left(G^{(i)}\right)_{i=1}^t$, the Cartesian product of $G^{(1)},\ldots, G^{(t)}$, denoted by $G=G^{(1)}\square \cdots \square G^{(t)}$ or $G=\square_{i=1}^{t}G^{(i)}$, is the graph with the vertex set
\begin{align*}
    V(G)=\left\{v=(v_1,v_2,\ldots,v_t) \colon v_i\in V(G^{(i)}) \text{ for all } i \in [t]\right\},
\end{align*}
and the edge set
\begin{align*}
   E(G)=\left\{uv \colon \begin{array}{l} \text{there is some } i\in [t] \text{ such that }  u_j=v_j\\
    \text{for all } j \neq i \text{ and } u_iv_i\in E(G^{(i)}) \end{array}\right\}.
\end{align*}
We call $G^{(1)},\ldots, G^{(t)}$ the \textit{base graphs of} $G$, and say that they are \textit{non-trivial} if $|V(G^{(i)})|>1$. 

It is perhaps not unreasonable to expect that percolation on other \emph{high-dimensional graphs} might display similar behaviour. Indeed, recently Lichev \cite{L22} considered percolation on high-dimensional product graphs, under the assumption that the \emph{isoperimetric constants} of the base graphs were not shrinking too quickly. The \textit{isoperimetric constant} $i(H)$, also known as the Cheeger constant, of a graph $H$ is given by $i(H)=\min_{\begin{subarray}{c}S\subseteq V(H),\\
|S|\le |V(H)|/2\end{subarray}}\frac{e(S, S^C)}{|S|}$. Lichev \cite{L22} showed that the component structure of random subgraphs of such graphs undergoes phase transition when $p$ is around $\frac{1}{d}$ where $d\coloneqq d(G)$ is the average degree of the host graph $G$ --- if $p=\frac{1-\epsilon}{d}$, then with probability tending to one as $d$ tends to infinity, the components of $G_p$ are of order $o(|V(G)|)$, and if $p=\frac{1+\epsilon}{d}$, then with probability tending to one as $d$ tends to infinity, there exists a giant component in $G_p$ taking a linear fraction of the vertices. Note that this result only gives a \textit{qualitative} description of the phase transition, in the sense that a largest component in the supercritical regime is shown to contain a linear fraction of the vertices, but neither its uniqueness nor the leading constant are determined, and a largest component in the subcritical regime is only shown to have sublinear order.

\subsection{Main results}
Under the assumption that the base graphs are regular and of bounded order, our main result recovers the sharp asymptotic order of the largest component and shows that it is typically unique, with all the other components typically being of logarithmic order. 
\begin{theorem}\label{supercritical regime}
Let $C>1$ be a constant and let $\epsilon>0$ be a sufficiently small constant. Let $\left(G^{(i)}\right)_{i=1}^t$ be a sequence of connected and regular graphs, such that for every $i\in [t]$, $1<\big|V\left(G^{(i)}\right)\big|\le C$ and the degree of $G^{(i)}$ is $d_i$. Let $G=\square_{i=1}^{t}G^{(i)}$ and let $p=\frac{1+\epsilon}{d}$, where $d\coloneqq d(G) =\sum_{i=1}^{t}d_i$ is the degree of $G$. Then, \textbf{whp}\footnote{With high probability, that is, with probability tending to one as $t$ tends to infinity --- the error terms are vanishing with growing $t$. For example, in $Q^d$, $t=d$ and the asymptotics are in $d$.},
there exists a unique giant component of order $\left(1+o(1)\right)y|V(G)|$ in $G_p$, where $y=y(\epsilon)$ is defined as in (\ref{survival prob}). Furthermore, \textbf{whp}, all the remaining components of $G_p$ are of order $O_{\epsilon}(\log |V(G)|)$.
\end{theorem}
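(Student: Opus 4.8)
The plan is to follow the now-standard two-round exposure (sprinkling) strategy used for the hypercube, but adapted to the product structure. Write $p = \frac{1+\epsilon}{d}$ and split the percolation into two independent rounds $p_1, p_2$ with $1 - p = (1-p_1)(1-p_2)$, where $p_1 = \frac{1+\epsilon/2}{d}$ say, so that $p_2 = \Theta(\epsilon/d)$. In the first round I would establish that $G_{p_1}$ contains a single component of order $\Theta_\epsilon(n)$ together with a good expansion-type property, and that all other components are small; in the second round I would sprinkle the remaining edges to (i) absorb almost all vertices lying in large-ish components into one giant component of the correct asymptotic order, and (ii) rule out the existence of medium-sized components. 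For the lower bound on the giant and the small-component bounds, the key engine is a local exploration / BFS comparison to a branching process: for a typical vertex $v$, the exploration of its component in $G_p$ stochastically dominates (up to negligible corrections coming from the bounded number of vertices revealed so far and the $\Delta(G) \le Ct$-type degree bounds) a Galton–Watson process with offspring distribution $\mathrm{Bin}(d', p)$ for $d'$ very close to $d$; hence the survival probability is $(1+o(1))y$, giving that the number of vertices in "large" components is $(1+o(1))yn$ whp via a first and second moment computation on the indicator that $v$'s exploration reaches size, say, $K\log n$. The matching upper bound $(1+o(1))yn$ on the giant uses that no vertex can lie in a component larger than that unless the exploration from it survives, and a union bound over the $n$ vertices controls the error.

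The heart of the argument, and the place where the product structure (as opposed to mere expansion) is used, is the sprinkling step that merges the large components and kills the medium ones. Here I would exploit that each of the $t$ base graphs is connected of bounded order $\le C$, so $G$ has a rich supply of "parallel" copies: fixing a direction $i$, the graph $G$ decomposes into $|V(G^{(i)})| \le C$ disjoint copies of $\square_{j \ne i} G^{(j)}$, linked by a perfect "matching-like" structure coming from $G^{(i)}$. The strategy is: given a set $W$ of vertices that we already know lies in large components after round one, we want to show that after sprinkling almost all of $W$ ends up connected. For this I would show that any component $\mathcal{C}$ of $G_{p_1}$ of order at least $\exp(c\epsilon^2 t)$, say (or more precisely at least polynomially large, depending on how the isoperimetric/expansion bound is quantified), has many edges of $G$ leaving it — this is where regularity plus the product isoperimetric inequality (the fact that subsets of Cartesian products of connected bounded graphs have edge-boundary at least of order $|S| \cdot \frac{\text{something}}{\sqrt t}$ or similar) is invoked — and then sprinkling connects these large components to each other through these boundary edges with probability $1 - o(1)$ per pair, with enough room for a union bound. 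Simultaneously, to rule out medium components (of order between $\Omega_\epsilon(d)$ and $o(n)$), one shows any such surviving component in $G_p$ would have to have survived round one as a biggish piece and then failed to get absorbed, an event of probability $o(1)$ after union bound, using again the large edge-boundary of such sets.

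The main obstacle I anticipate is obtaining a sufficiently strong isoperimetric statement for subsets of the product graph $G$ that is uniform over all admissible base graphs of bounded order — i.e., a lower bound on $e(S, S^c)$ for $|S| \le n/2$ that decays slowly enough in $t$ (something like $i(G) = \Omega(1/t)$, or a vertex-boundary version) and, crucially, a bound that is strong enough for sets of size down to $\Theta(\log n) = \Theta(t)$, since we need to kill components down to order $O_\epsilon(d) = O_\epsilon(t)$. General expansion (à la Lichev) gives linear-order control but not logarithmic; to push down to $O_\epsilon(d)$ one needs something closer to the "local" isoperimetry of products, e.g. that small connected subsets of $G$ still have boundary of order $|S|$ (up to constants depending only on $C$) — this should follow from a Loomis–Whitney / tensorization argument on the bounded base graphs, but making it clean and uniform in the base graphs is the delicate part. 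A secondary obstacle is the branching-process coupling: because $\Delta(G)$ grows with $t$, one must be careful that the exploration does not revisit vertices too often and that the offspring distribution stays $\mathrm{Bin}(d - o(d), p)$ throughout an exploration of length up to $\Theta(\log n)$; this requires that the number of "short cycles" through a typical vertex is controlled, which again follows from the bounded order of the base graphs (girth-type or codegree-type estimates).
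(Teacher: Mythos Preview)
Your high-level outline---multi-round exposure, branching-process comparison for the lower bound on the number of vertices in large components, and sprinkling to merge---matches the paper's strategy. However, there is a genuine gap in the step where you kill medium-sized components, and the paper's resolution of this step is precisely its main novelty.

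You correctly flag the obstacle: to rule out components of order between $\Theta_\epsilon(d)$ and $d^k$ by a direct first-moment/sprinkling argument, you would need an edge-boundary for connected sets $S$ of that size that is strong enough to beat the $n(ed)^{|S|}$ count of such sets. Boundary of order $|S|$ (which is what Chung--Tetali gives, $i(G)\ge\frac{1}{2C}$, and what you write you are aiming for) is \emph{not} sufficient: sprinkling at rate $\Theta(\epsilon/d)$ across $\Theta(|S|)$ boundary edges gives failure probability $\exp(-\Theta(\epsilon|S|/d))$, which does not beat $(ed)^{|S|}$. What you would actually need is boundary of order $|S|\cdot d$, analogous to Harper's inequality in $Q^d$. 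The paper explicitly states this stronger isoperimetric inequality as an open conjecture (their Conjecture~5.2) and does \emph{not} use it; your proposed Loomis--Whitney route would have to prove that conjecture, and there is no indication this is straightforward.

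The paper's way around this is a \emph{projection lemma}: any set $M$ of $m\le t$ vertices can be covered by $m$ pairwise disjoint projected sub-products of $G$, each of dimension at least $t-m+1$, with one point of $M$ per projection. This buys genuine independence: for any $m=O(\epsilon d)$ vertices, one can run separate explorations inside disjoint sub-products (each still supercritical), yielding failure probabilities of the form $\exp(-\Theta(\epsilon^2 d m))$, i.e.\ $\exp(-\Theta(d^2))$ for $m=\Theta(d)$. This is strong enough to union-bound over all $n(ed)^{\Theta(d)}$ connected sets of size $\Theta(d)$, and shows that \textbf{whp} every such set has most of its vertices adjacent to many vertices already in big components. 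With this density property in hand, one only needs to union-bound over the at most $n$ actual components of $G_p$ outside the big-component vertices, which is easy. The weak (constant) isoperimetry is used only once, in the final merging of the big components via paths of bounded length. The same projection lemma also drives the inductive proof that a vertex lies in a component of size $\ge d^k$ with probability $\ge y-o(1)$, replacing your proposed girth/codegree control: one explores only $d^{1/2}$ vertices by BFS (so the trivial bound $d-d^{1/2}$ on unexplored neighbours suffices), then projects and recurses. Finally, the paper uses three rounds of exposure, not two; the third (at rate $d^{-2}$) is reserved for the path-sprinkling merge.
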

Observe that the assumption that all the base graphs have bounded order guarantees that the asymptotic behaviour comes from the product structure itself and the number $t$ of graphs it multiplies. Furthermore, we note that in the setting of Theorem \ref{subcritical regime}, $d=\Theta_C(\log |V(G)|)$, and so the degree of the host graph $G$ is growing with its order. Let us also remark that the above theorem also captures the case of the $t$-dimensional hypercube --- there for all $i\in [t]$ we have that $G^{(i)}=K_2$ and $C=2$. 

Let us complete the picture by considering the subcritical regime.
\begin{theorem}\label{subcritical regime}
Let $G$ be a graph on $n$ vertices with maximum degree $d$, let $0<\epsilon<1$ be a constant and let $p=\frac{1-\epsilon}{d}$. Then, with probability tending to one as $n$ tends to infinity, all the components of $G_p$ are of order at most $\frac{9\log n}{\epsilon^2}$.
\end{theorem}
Let us stress that variants of Theorem \ref{subcritical regime} are by now well-known (see, for example, \cite[page 23]{H63} and \cite{NP10}). We include the proof of Theorem \ref{subcritical regime} for the sake of completeness, noting that it is relatively short, and utilises the Breadth First Search (BFS) algorithm.

\subsection{Comparison with previous results}
Theorems \ref{supercritical regime} and \ref{subcritical regime} together show that when the underlying asymptotic geometry of the graph arises from a product structure, where the base graphs are bounded and regular, the percolated graph exhibits the Erd\H{o}s-R\'enyi component phenomenon, that is, the model undergoes \textit{quantitatively} similar phase transition to that of $G(d+1,p)$ around $p=\frac{1}{d}$. This strengthens the result of Lichev \cite{L22}, and generalises the results for the hypercube \cite{AKS81, BKL92} while providing shorter and simpler proofs. 

Let us note that Lichev \cite{L22} does not assume regularity or bounded order of the base graphs. In a companion paper \cite{DEKK23} the authors show that if one allows the base graphs to be irregular, then there are product graphs for which in the subcritical regime the largest component is typically of polynomial order (and not logarithmic). Furthermore, if one allows the base graphs to be of unbounded order, then there are product graphs for which a largest component in the supercritical regime is typically of sublinear order. We elaborate more on the conditions assumed both here and in \cite{L22}, and their implications, in Section \ref{discussion}.

Furthermore, while the result of \cite{BKL92} relies on Harper's edge-isoperimetric inequality for the hypercube \cite{H64}, which in particular implies that sets of polylogarithmic order in $Q^d$ have edge-expansion by a factor of $(1-o(1))d$, our proof only uses a much weaker and simpler isoperimetric inequality (see Theorem \ref{productiso}), which guarantees only edge-expansion by a constant factor. Indeed, our main innovation here, which avoids the need for a strong isoperimetric inequality, is a careful implementation of a simple yet powerful projection lemma (see Section \ref{s:proj}), and of a multi-round sprinkling argument (see Lemma \ref{the gap}). We expect these methods to be useful in other contexts where the analysis is constrained by the lack of a strong enough isoperimetric inequality. In particular, the proofs in this paper should be relatively easy to modify to the setting of site percolation on high-dimensional product graphs. We discuss this further in Section \ref{discussion}.

\subsection{Structure of the paper}
In Section \ref{S:prelim}, we introduce some notation, terminology and preliminary lemmas which will serve us throughout the rest of the paper. In Sections \ref{S: supercritical} and \ref{S: subcritical} we prove Theorems \ref{supercritical regime} and \ref{subcritical regime}, respectively. Finally, in Section \ref{discussion} we discuss our results and avenues for future research.
\section{Preliminaries}\label{S:prelim}
\subsection{Notation and terminology}
Let us introduce some notation and terminology, which we will use throughout the rest of the paper.

Recall that given a product graph $G=\square_{i=1}^tG^{(i)}$, we call the $G^{(i)}$ the \textit{base graphs} of $G$. Given a vertex $u = (u_1,u_2, \ldots, u_t)$ in $V(G)$ and $i \in [t]$ we call the vertex $u_i\in V(G^{(i)})$ the \textit{$i$-th coordinate} of $G$. As is standard, we may still enumerate the vertices of a given set $M$, such as $M=\left\{v_1,\ldots, v_m\right\}$ with $v_i\in V(G)$. Whenever confusion may arise, we will clarify whether the subscript stands for enumeration of the vertices of the set, or for their coordinates.
When $G^{(i)}$ is a graph on a single vertex, that is, $G^{(i)}=\left(\{u\},\varnothing\right)$, we call it \textit{trivial} (and \textit{non-trivial}, otherwise). We define the \textit{dimension} of $G=\square_{i=1}^tG^{(i)}$ to be the number of base graphs $G^{(i)}$ of $G$ which are non-trivial. Note that in Theorem \ref{supercritical regime}, we assumed that the base graphs have more than one vertex, implying that they are non-trivial. Given $H\subseteq G=\square_{i=1}^tG^{(i)}$, we call $H$ a \textit{projection of} $G$ if $H$ can be written as $H=\square_{i=1}^tH^{(i)}$ where for every $1\le i\le t$, $H^{(i)}=G^{(i)}$ or $H^{(i)}=\{v_i\}\subseteq V(G^{(i)})$; that is, $H$ is a projection of $G$ if it is the Cartesian product graph of base graphs $G^{(i)}$ and their trivial subgraphs. In that case, we further say that $H$ is the projection of $G$ onto the coordinates corresponding to the trivial subgraphs. For example, let $u_i\in V(G^{(i)})$ for $1\le i\le k$, and let $H=\{u_1\}\square\cdots\square\{u_k\}\square G^{(k+1)}\square\cdots\square G^{(t)}$. In this case we say that $H$ is a projection of $G$ onto the first $k$ coordinates, and its dimension is $t-k$.

Throughout the rest of the paper, unless explicitly stated otherwise, we let $C>1$ be an integer, let $G^{(1)},\ldots, G^{(t)}$ be regular and connected graphs with $1<\big|V\left(G^{(i)}\right)\big|\le C$ for all $i\in[t]$, and let $G=\square_{i=1}^{t}G^{(i)}$. We write $t$ for the dimension of $G$, $d\coloneqq \sum_{i=1}^{t}d\left(G^{(i)}\right)$ for the degree of $G$ and $n\coloneqq  |V(G)|$. 

We assume throughout the paper that $t\to\infty$, and our asymptotic notation will be with respect to $t$. Given a graph $H$ and a vertex $v \in V(H)$, we denote by $C_v(H)$ the component of $v$ in $H$, and by $N_H(v)$ the neighbours of $v$ in $H$. All logarithms are with the natural base. We omit rounding signs for the sake of clarity of presentation.

\subsection{The BFS algorithm} \label{BFS description}
For the proofs of our main results, we will use the Breadth First Search (BFS) algorithm. This algorithm explores the components of a graph $G$ by building a maximal spanning forest. 

The algorithm maintains three sets of vertices: 
\begin{itemize}
\item $S$, the set of vertices whose exploration is complete; 
\item $Q$, the set of vertices currently being explored, kept in a queue; and
\item $T$, the set of vertices that have not been explored yet.
\end{itemize}
The algorithm receives as input a graph $G$ and a linear ordering $\sigma$ on its vertices. It starts with $S=Q=\emptyset$ and $T=V(G)$, and ends when $Q\cup T=\emptyset$. At each step, if $Q$ is non-empty, the algorithm queries the vertices in $T$, in the order $\sigma$, to ascertain if they are neighbours in $G$ of the first vertex $v$ in $Q$. Each neighbour discovered is added to the back of the queue $Q$. Once all neighbours of $v$ have been discovered, we move $v$ from $Q$ to $S$. If $Q=\emptyset$, we move the next vertex from $T$ (according to $\sigma$) into $Q$. Note that the set of edges queried during the algorithm forms a maximal spanning forest of $G$.

In order to analyse the BFS algorithm on a random subgraph $G_p$ of a graph $G$ with $m$ edges, we will utilise the \emph{principle of deferred decisions}. That is, we will take a sequence $(X_i \colon 1 \leq i \leq m)$ of i.i.d Bernoulli$(p)$ random variables, which we will think of as representing a positive or negative answer to a query in the algorithm. When the $i$-th edge of $G$ is queried during the BFS algorithm we will include it in $G_p$ if and only if $X_i=1$.

\subsection{Preliminary Lemmas}
We will make use of two standard probabilistic bounds. The first one is a typical Chernoff type tail bound on the binomial distribution (see, for example, Appendix A in \cite{AS16}).
\begin{lemma}\label{chernoff}
Let $n\in \mathbb{N}$, let $p\in [0,1]$, and let $X\sim Bin(n,p)$. Then for any $0<t\le \frac{np}{2}$, 
\begin{align*}
    &\mathbb{P}\left[|X-np|\ge t\right]\le 2\exp\left(-\frac{t^2}{3np}\right).
\end{align*}
\end{lemma}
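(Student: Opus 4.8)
This is the standard multiplicative Chernoff bound, and the plan is to derive it by the exponential moment method, bounding each of the two tails separately and then combining them by a union bound (which is where the factor $2$ comes from).

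I would write $X=\sum_{i=1}^{n}X_i$ with the $X_i$ i.i.d.\ $\mathrm{Bernoulli}(p)$, set $\mu=np$, and for the upper tail fix $\lambda>0$ and apply Markov's inequality to $e^{\lambda X}$:
\[
\mathbb{P}[X\ge \mu+t]\le e^{-\lambda(\mu+t)}\,\mathbb{E}\bigl[e^{\lambda X}\bigr]=e^{-\lambda(\mu+t)}\bigl(1+p(e^{\lambda}-1)\bigr)^{n}\le \exp\bigl(\mu(e^{\lambda}-1)-\lambda(\mu+t)\bigr),
\]
using $1+x\le e^{x}$ in the last step. Minimising the exponent over $\lambda>0$, the optimal choice being $\lambda=\log(1+t/\mu)$, gives $\mathbb{P}[X\ge\mu+t]\le \exp(-\mu\,\varphi(t/\mu))$ with $\varphi(u)=(1+u)\log(1+u)-u$. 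The lower tail is symmetric: the same computation run with $\lambda$ replaced by $-\lambda$ yields $\mathbb{P}[X\le\mu-t]\le\exp(-\mu\,\psi(t/\mu))$ with $\psi(u)=(1-u)\log(1-u)+u$ for $u\in[0,1)$.

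It would then remain to bound the two rate functions below by quadratics. A short calculus check --- each of $\psi(u)-u^{2}/2$ and, on the relevant range, $\varphi(u)-u^{2}/3$ vanishes together with its first derivative at $u=0$ and is convex there --- shows $\psi(u)\ge u^{2}/2$ for all $u\in[0,1)$ and $\varphi(u)\ge u^{2}/3$ for $u\le\tfrac{3}{2}$, so that each one-sided probability is at most $\exp(-t^{2}/(3\mu))$, and a union bound over the two tails gives the stated $2\exp(-t^{2}/(3\mu))$. I expect the only point needing any care to be that this clean quadratic lower bound on the upper-tail rate function $\varphi$ degrades once $u$ is large (there $\varphi$ is superlinear but subquadratic), so that the inequality is really meant for deviations $t=O(\mu)$ --- which is the only regime in which we apply it, and is in any case exactly the form recorded in Appendix A of \cite{AS16}, so one could simply quote it.
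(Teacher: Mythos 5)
The paper offers no proof of this lemma --- it simply cites Appendix~A of \cite{AS16} --- so there is no in-paper argument to compare against. Your derivation is the standard Cram\'{e}r/exponential-moment argument, and the computations all check out: the optimised one-sided exponents $\exp(-\mu\varphi(t/\mu))$ and $\exp(-\mu\psi(t/\mu))$ with $\varphi(u)=(1+u)\log(1+u)-u$ and $\psi(u)=(1-u)\log(1-u)+u$, the bound $\psi(u)\ge u^{2}/2$ on $[0,1)$ (which suffices for the lower tail, since $\mathbb{P}[X\le np-t]$ vanishes once $t>np$ and the case $t=np$ is handled directly), and the bound $\varphi(u)\ge u^{2}/3$ for $u\le 3/2$ (by the convex-then-concave argument for $h(u)=\varphi(u)-u^{2}/3$ with $h(0)=h'(0)=0$, $h''\ge 0$ on $[0,1/2]$, $h''\le 0$ on $[1/2,\infty)$, and $h(3/2)>0$). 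You are also right to flag the subtlety: the inequality cannot hold verbatim ``for any $t\ge 0$'', because $\varphi(u)\sim u\log u$ for large $u$ is subquadratic, and indeed for $n=1$, $p$ small and $t=1-p$ one has $\mathbb{P}[|X-np|\ge t]=p$ while $2\exp(-(1-p)^{2}/(3p))$ is far smaller. This is a cosmetic defect of the lemma as recorded, not of your argument: the bound is valid for $t\le\tfrac{3}{2}np$, exactly the range your rate-function estimate covers, and the correct unrestricted statement is the Bernstein-type $\mathbb{P}[|X-np|\ge t]\le 2\exp\bigl(-t^{2}/(2(np+t/3))\bigr)$, which specialises to the stated form precisely when $t\le\tfrac{3}{2}np$.
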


The second one is the well-known Azuma-Hoeffding inequality (see, for example, Chapter 7 in \cite{AS16}),
\begin{lemma}\label{azuma}
    Let $X = (X_1,X_2,\ldots, X_m)$ be a random vector with range $\Lambda = \prod_{i \in [m]} \Lambda_i$, and let $f:\Lambda\to\mathbb{R}$ be such that there exists $C \in \mathbb{R}^m$ such that for every $x,x' \in \Lambda$ which differ only in the $i$-th coordinate,
    \begin{align*}
        |f(x)-f(x')|\le C_i.
    \end{align*}
    Then, for every $t\ge 0$,
    \begin{align*}
        \mathbb{P}\left[\big|f(X)-\mathbb{E}\left[f(X)\right]\big|\ge t\right]\le 2\exp\left(-\frac{t^2}{2\sum_{i=1}^nC_i^2}\right).
    \end{align*}
\end{lemma}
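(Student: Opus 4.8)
The plan is to prove this by the standard Doob-martingale (bounded-differences) argument; note that we use the independence of the coordinates $X_1,\dots,X_m$, which is implicit in the product structure of $\Lambda$. Let $\mathcal{F}_k$ denote the $\sigma$-algebra generated by $X_1,\dots,X_k$, with $\mathcal{F}_0$ trivial, and define the martingale $Z_k=\mathbb{E}\!\left[f(X)\mid\mathcal{F}_k\right]$ for $0\le k\le m$, so that $Z_0=\mathbb{E}[f(X)]$ and $Z_m=f(X)$. Writing $D_k:=Z_k-Z_{k-1}$, we have the telescoping identity $f(X)-\mathbb{E}[f(X)]=\sum_{k=1}^m D_k$ and $\mathbb{E}[D_k\mid\mathcal{F}_{k-1}]=0$, so the whole task reduces to bounding $|D_k|$.

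First I would establish the pointwise bound $|D_k|\le C_k$. By independence, setting $g(x_1,\dots,x_k):=\mathbb{E}\!\left[f(x_1,\dots,x_k,X_{k+1},\dots,X_m)\right]$ we have $Z_k=g(X_1,\dots,X_k)$ and $Z_{k-1}=\mathbb{E}_{X_k'}\!\left[g(X_1,\dots,X_{k-1},X_k')\right]$ for an independent copy $X_k'$ of $X_k$. For any $a,b\in\Lambda_k$ the points $(X_1,\dots,X_{k-1},a,X_{k+1},\dots,X_m)$ and $(X_1,\dots,X_{k-1},b,X_{k+1},\dots,X_m)$ differ only in the $k$-th coordinate, so $f$ differs by at most $C_k$ there; averaging over the tail coordinates gives $\left|g(X_1,\dots,X_{k-1},a)-g(X_1,\dots,X_{k-1},b)\right|\le C_k$, and since $Z_{k-1}$ is a convex combination of such values of $g(X_1,\dots,X_{k-1},\cdot)$, we conclude $|D_k|\le C_k$.

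Next I would bound the conditional moment generating function of each difference: by Hoeffding's lemma, a mean-zero random variable $Y$ with $|Y|\le c$ satisfies $\mathbb{E}[e^{\lambda Y}]\le\exp(\lambda^2c^2/2)$ for every $\lambda$, so applying this conditionally on $\mathcal{F}_{k-1}$ gives $\mathbb{E}[e^{\lambda D_k}\mid\mathcal{F}_{k-1}]\le\exp(\lambda^2C_k^2/2)$. Peeling off the differences one at a time (conditioning on $\mathcal{F}_{m-1},\mathcal{F}_{m-2},\dots$ in turn) yields $\mathbb{E}\!\left[e^{\lambda(f(X)-\mathbb{E}[f(X)])}\right]\le\exp\!\left(\tfrac{\lambda^2}{2}\sum_{k=1}^m C_k^2\right)$. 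Markov's inequality applied to $e^{\lambda(f(X)-\mathbb{E}[f(X)])}$ with the optimal choice $\lambda=t/\sum_k C_k^2$ then gives $\mathbb{P}\!\left[f(X)-\mathbb{E}[f(X)]\ge t\right]\le\exp\!\left(-t^2/\big(2\sum_k C_k^2\big)\right)$; repeating the argument with $-f$ in place of $f$ gives the matching lower-tail bound, and a union bound over the two events yields the factor $2$. The one genuinely analytic input is Hoeffding's lemma, which I would dispatch in a line or two from the convexity of $y\mapsto e^{\lambda y}$ on $[-c,c]$ together with a second-order Taylor estimate of the associated cumulant generating function; the main structural point, and the only place the independence of the $X_i$ is used, is the pointwise bound $|D_k|\le C_k$.
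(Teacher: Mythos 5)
The paper does not prove Lemma~\ref{azuma}; it cites it as a standard fact (Azuma--Hoeffding / McDiarmid's bounded-differences inequality, referencing Chapter~7 of \cite{AS16}), so there is no in-paper proof to compare against. Your argument is the standard and correct proof of that cited result: the Doob martingale $Z_k=\mathbb{E}[f(X)\mid\mathcal{F}_k]$, the pointwise bound $|D_k|\le C_k$ (which is where the independence of the coordinates, implicit in the statement, is genuinely used), Hoeffding's lemma for the conditional moment generating function, the peeling argument, and the optimized Chernoff bound reproduce exactly the stated constant $\exp\bigl(-t^2/(2\sum_k C_k^2)\bigr)$. One minor remark: as you note, the statement as written omits the word ``independent'' for the coordinates $X_1,\dots,X_m$, but this hypothesis is indeed required for the bounded-differences formulation and is the intended reading.
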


The following result of Chung and Tetali \cite[Theorem 2]{CT98}, which gives a fairly simple lower bound on the isoperimetric constant of product graphs, will suffice for our proof.

\begin{thm}[\cite{CT98}]\label{productiso}
Let $G^{(1)},\ldots, G^{(t)}$ be graphs and let $G = \square_{j=1}^{t}G^{(j)}$. Then $i(G) \geq \frac{1}{2}\min_j \left\{i\left( G^{(j)}\right) \right\}$.
\end{thm}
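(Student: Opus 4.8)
The plan is to prove the two inequalities separately; the upper bound is easy, and the lower bound carries all the content.

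For the upper bound $i(G)\le\min_j i(G^{(j)})$, fix an index $j$ and a set $S_j\subseteq V(G^{(j)})$ with $|S_j|\le |V(G^{(j)})|/2$ and $e_{G^{(j)}}(S_j,S_j^C)/|S_j|$ equal to (or arbitrarily close to) $i(G^{(j)})$, and take the ``cylinder'' $S:=\{v\in V(G): v_j\in S_j\}$. Then $|S|=|S_j|\prod_{i\neq j}|V(G^{(i)})|\le |V(G)|/2$, while by the definition of the Cartesian product the only edges of $G$ between $S$ and $S^C$ are those obtained from an edge of $G^{(j)}$ between $S_j$ and $S_j^C$ by fixing all other coordinates, so $e_G(S,S^C)=e_{G^{(j)}}(S_j,S_j^C)\prod_{i\neq j}|V(G^{(i)})|$ and hence $e_G(S,S^C)/|S|=i(G^{(j)})$. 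Minimising over $j$ gives the bound.

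For the lower bound I would argue by induction on $t$, proving $i(\square_{i=1}^{t}G^{(i)})\ge\tfrac12\min_i i(G^{(i)})=:\tfrac{\iota}{2}$; the case $t=1$ is trivial. For the inductive step write $G=G^{(1)}\square G'$ with $G':=\square_{i=2}^{t}G^{(i)}$, so that by induction $i(G')\ge\tfrac12\min_{i\ge 2}i(G^{(i)})\ge\tfrac{\iota}{2}$. Fix $S\subseteq V(G)$ with $|S|\le|V(G)|/2$. The key structural fact is that $G$ decomposes into $|V(G')|$ fibers in the first direction, each a copy of $G^{(1)}$, and into $|V(G^{(1)})|$ fibers in the remaining directions, each a copy of $G'$, and that $E(G)$ is partitioned accordingly. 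For $w\in V(G')$ let $S^w:=\{x\in V(G^{(1)}):(x,w)\in S\}$ be the trace of $S$ on the first-direction fiber over $w$, and for $x\in V(G^{(1)})$ let $S_x:=\{w\in V(G'):(x,w)\in S\}$ be its trace on the $G'$-fiber over $x$; then $\sum_w|S^w|=\sum_x|S_x|=|S|$. Applying the isoperimetric inequality of $G^{(1)}$ inside each first-direction fiber and the inductive bound for $G'$ inside each $G'$-fiber — in each fiber passing, if necessary, to the smaller of a set and its complement, which has the same edge boundary within that fiber — one obtains
\begin{align*}
 e_G(S,S^C)\ \ge\ \iota\sum_{w\in V(G')}\min\bigl(|S^w|,\, h-|S^w|\bigr)\ +\ \frac{\iota}{2}\sum_{x\in V(G^{(1)})}\min\bigl(|S_x|,\, n'-|S_x|\bigr),
\end{align*}
where $h:=|V(G^{(1)})|$ and $n':=|V(G')|$.

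It then remains to deduce that this lower bound is at least $\tfrac{\iota}{2}|S|$, i.e., the purely combinatorial inequality, valid for every $S$ with $|S|\le hn'/2$,
\begin{align*}
 2\sum_{w}\min\bigl(|S^w|,\, h-|S^w|\bigr)\ +\ \sum_{x}\min\bigl(|S_x|,\, n'-|S_x|\bigr)\ \ge\ |S|.
\end{align*}
This is the step I expect to be the real obstacle, and it is where the factor $\tfrac12$ in the statement is lost. The mechanism is a dichotomy on the balance of the $G'$-fibers: the second sum differs from $|S|=\sum_x|S_x|$ precisely by $\sum_x\bigl(2|S_x|-n'\bigr)^+$, the total overflow of the fibers $S_x$ that are more than half full, so if that overflow is small the second sum alone already yields $|S|$; whereas if the overflow is large then many fibers $S_x$ are almost full, and since the overall density of $S$ is at most one half — the one point where the hypothesis $|S|\le|V(G)|/2$ enters — an averaging argument forces the traces $S^w$ to be balanced enough on average for the first sum to make up the shortfall. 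A clean way to organise the recursion lurking here is to note that if in some direction every fiber is entirely inside or entirely outside $S$ then one of the two sums vanishes, but then $S$ is a union of fibers in that direction and the claim reduces to the inductive hypothesis applied to the projection of $S$ onto the other coordinates; otherwise there are genuinely cut fibers in each direction and the averaging has the slack it needs. Carrying out this trade-off quantitatively, so as to land on exactly the constant $\tfrac12$, is the heart of the argument.
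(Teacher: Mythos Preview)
The paper does not prove this statement: Theorem~\ref{productiso} is quoted from \cite{CT98} as a black-box preliminary, so there is no proof in the paper to compare yours against. Let me therefore comment on your argument on its own terms.

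Your upper bound is complete and correct. For the lower bound, the inductive framework is sound and the displayed combinatorial inequality
\[
2\sum_{w}\min\bigl(|S^w|,\,h-|S^w|\bigr)+\sum_{x}\min\bigl(|S_x|,\,n'-|S_x|\bigr)\ \ge\ |S|
\]
is exactly what the induction step needs. But you do not prove it: the closing paragraph is a heuristic, not an argument. Writing $P:=\sum_w(2|S^w|-h)^+$ and $Q:=\sum_x(2|S_x|-n')^+$, the inequality is equivalent to $2P+Q\le 2|S|$, and this is the actual content. Your observation that the degenerate cases (all fibers in one direction either full or empty) reduce to the inductive hypothesis is fine, and the intuition that large row-overflow $Q$ must be compensated by balanced columns because the global density is at most one half is the right picture. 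What is missing is any mechanism that converts ``many rows more than half full'' into a quantitative lower bound on $\sum_w\min(|S^w|,h-|S^w|)$; the sentence ``the averaging has the slack it needs'' is precisely the claim to be established, and nothing you wrote does that work. Until this step is actually carried out the induction does not close, so the proposal has a genuine gap at the point you yourself flag as its heart.
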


Finally, we will use the following bound on the number of $k$-vertex trees in a bounded-degree graph.
\begin{lemma}[{\cite[Lemma 2]{BFM98}}]\label{BFM98}
Let $G$ be a graph on $n$ vertices with maximum degree $d$. Let $t_k(G)$ be the number of $k$-vertex trees in $G$. Then,
\begin{align*}
  t_k(G)\le n(ed)^{k-1}.
\end{align*}
\end{lemma}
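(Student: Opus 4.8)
The plan is to prove the stronger, \emph{rooted} statement that for every vertex $v\in V(G)$ the number $f(v,m)$ of $m$-vertex subtrees of $G$ containing $v$ satisfies $f(v,m)\le (ed)^{m-1}$, and then deduce the lemma by summing over $v$. Indeed, counting incidences between $m$-vertex subtrees and their vertices gives $\sum_{v\in V(G)}f(v,m)=m\cdot t_m(G)$, so
$t_m(G)=\frac1m\sum_{v}f(v,m)\le \frac nm(ed)^{m-1}\le n(ed)^{m-1}$. Hence all the work lies in bounding $f(v,m)$.

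To bound $f(v,m)$ I would encode a subtree by unfolding it from $v$ into an infinite $d$-ary tree. Fix, once and for all, at each vertex $w$ of $G$ an injection from the edges at $w$ into $[d]$ (possible since $\Delta(G)\le d$); call these the \emph{port labels at $w$}. Let $\mathbb T_d$ be the infinite rooted tree whose vertices are the finite words over $[d]$, each word joined to its one-letter extensions, rooted at the empty word. Given an $m$-vertex subtree $T$ with $v\in V(T)$, root $T$ at $v$; each $u\in V(T)$ has a unique path $v=w_0,w_1,\dots,w_k=u$ in $T$, and we map $u$ to the word $\pi(u):=(\ell_1,\dots,\ell_k)$, where $\ell_i$ is the port label at $w_{i-1}$ of the edge $w_{i-1}w_i$. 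The word $\pi(u)$ determines the walk $w_0,\dots,w_k$ in $G$, hence $u$; thus $\pi$ is injective on $V(T)$, and $\pi(V(T))$ is prefix-closed (every prefix of $\pi(u)$ is $\pi$ of the corresponding vertex on the path), i.e.\ it is the vertex set of an $m$-vertex subtree of $\mathbb T_d$ containing the root. Moreover $T\mapsto \pi(V(T))$ is injective, since from $\pi(V(T))$ one recovers $V(T)$ (the endpoints of the walks coded by its words) and $E(T)$ (the edges these walks traverse). Therefore $f(v,m)\le S_d(m)$, where $S_d(m)$ is the number of $m$-vertex subtrees of $\mathbb T_d$ containing the root.

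It remains to estimate $S_d(m)$. A subtree of $\mathbb T_d$ containing the root is obtained by choosing, at the root and recursively at every chosen vertex, a subset of its $d$ children together with a subtree hanging from each, so the generating function $F(x)=\sum_{m\ge1}S_d(m)x^m$ satisfies $F=x(1+F)^d$; by Lagrange inversion, $S_d(m)=\frac1m\binom{dm}{m-1}$. Using $\binom{N}{k}\le N^k/k!$ and $k!\ge(k/e)^k$, for $m\ge2$,
\begin{align*}
S_d(m)=\frac1m\binom{dm}{m-1}\le\frac1m\cdot\frac{(dm)^{m-1}}{(m-1)!}\le\frac1m(ed)^{m-1}\left(1+\frac1{m-1}\right)^{m-1}\le\frac em(ed)^{m-1}.
\end{align*}
This is at most $(ed)^{m-1}$ as soon as $m\ge3$; the remaining cases are immediate from the exact formula, $S_d(1)=1$ and $S_d(2)=d$. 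Hence $f(v,m)\le(ed)^{m-1}$ for all $m\ge1$, completing the argument.

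The step I expect to be delicate is getting the constant $e$ rather than something larger. A step-by-step depth-first encoding of a subtree would spend, at each of the $2(m-1)$ moves, a bit recording ``descend or backtrack'', costing an extra factor of roughly $\binom{2m}{m}\approx 4^{m-1}$ beyond the $d^{m-1}$ choices of which neighbour to descend to, and would thus only give $(4d)^{m-1}$. The word-encoding into $\mathbb T_d$ avoids this because it reveals each vertex's \emph{entire} set of children at once, so the count collapses to the single binomial coefficient $\binom{dm}{m-1}\approx(ed)^m$; the leftover factor $\tfrac1m$ from Lagrange inversion is exactly what brings this down below $(ed)^{m-1}$.
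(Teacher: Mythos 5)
The paper does not prove this lemma itself; it is cited verbatim from \cite[Lemma 2]{BFM98}, so there is no in-paper argument to compare against. Your proof is correct: the port-labelled unfolding into the infinite $d$-ary tree $\mathbb{T}_d$ gives an injection from rooted $m$-vertex subtrees of $G$ at $v$ to rooted $m$-vertex subtrees of $\mathbb{T}_d$; the Lagrange-inversion count $S_d(m)=\frac{1}{m}\binom{dm}{m-1}$ is right; and your Stirling estimate $S_d(m)\le \frac{e}{m}(ed)^{m-1}$, together with the explicit checks at $m=1,2$ (where $e/m>1$ so the estimate alone is insufficient), closes the argument. This is essentially the standard proof of this bound. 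Your double-counting step in fact yields the slightly sharper $t_m(G)\le \frac{n}{m}(ed)^{m-1}$, which you then weaken to the stated form.
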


\section{Supercritical regime}\label{S: supercritical}
Let us start by giving a brief sketch of the proof in this section.

We start in Section \ref{s:proj} by giving a useful technical lemma, which we call a \emph{projection lemma} (Lemma \ref{seperation lemma}), which allows us to cover a small set of points $M$ in the product graph with a set of pairwise disjoint projections of large dimension, each of which contains a unique point in $M$. This allows us to explore the graph `locally' around each of these points in an independent manner. 

Using this, in Section \ref{medium}, we show that \textbf{whp} a fixed proportion of the vertices in $G_p$ will be contained in \emph{big} components, of order at least $d^{16}$ (where the constant $16$ has been chosen rather arbitrarily). Then, in Section \ref{gap}, we show that these big components are typically `dense' in the host graph, in the sense that every vertex in $G$ is close to some big component of $G_p$. Finally, in Section \ref{proof}, we use a sprinkling argument to show that these big components merge into a unique giant component and determine its asymptotic order. This approach is relatively standard, and is roughly the method used by Ajtai, Koml\'os, and Szemer\'edi \cite{AKS81} and Bollob\'as, Kohayakawa, and \L{}uczak \cite{BKL92}, but the arguments are simplified substantially by our projection lemma.

However, these methods inherently can only establish a superlinear polynomial bound (in $d$) on the order of the second-largest component. In order to overcome that, the standard approach requires a strong isoperimetric inequality to demonstrate a gap in the size of the components. Here instead we utilise the projection lemma in an inductive manner, together with a carefully chosen multi-round sprinkling, in order to keep track more precisely of the properties of most of the vertices in big components. In particular, we show that any large enough connected set in $G_p$, where we only require these sets to be linearly large in $d$, must be adjacent to many vertices in big components. A further sprinkling argument then allows us to give an improved bound on the size of the second-largest component, completing the proof of Theorem \ref{supercritical regime}, without the need for a strong isoperimetric inequality.

\subsection{Projection lemma}\label{s:proj}
We begin by establishing the following fairly simple projection lemma (see \cite[Claim 2.2]{DK22} for its version for the hypercube), which will be useful throughout all of the subsequent sections.
\begin{lemma}\label{seperation lemma}
Let $M\subseteq V(G)$ be such that $|M|=m\le t$. Then, there exist pairwise disjoint projections $H_1, \ldots, H_m$ of $G$, each having dimension at least $t-m+1$, such that every $v\in M$ is in exactly one of these projections.
\end{lemma}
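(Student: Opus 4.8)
The plan is to find, for each $v \in M$, a set $S_v \subseteq [t]$ of "free" coordinates with $|S_v| \geq t-m+1$, such that the projections $H_v := \square_{i=1}^t H_v^{(i)}$, where $H_v^{(i)} = G^{(i)}$ if $i \in S_v$ and $H_v^{(i)} = \{(v)_i\}$ otherwise, both contain $v$ and are pairwise disjoint. Note $v \in H_v$ automatically since on the frozen coordinates $H_v$ takes exactly the coordinate of $v$. Two projections $H_u$ and $H_v$ are disjoint precisely when there is some coordinate $i$ that is frozen in both (i.e. $i \notin S_u \cup S_v$) and on which $u$ and $v$ differ, that is, $(u)_i \neq (v)_i$. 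So the combinatorial task reduces to: choose for each pair $u \neq v$ in $M$ a coordinate $i(u,v)$ with $(u)_{i(u,v)} \neq (v)_{i(u,v)}$ (such a coordinate exists since $u \neq v$ as vertices of the product), then freeze at each vertex the coordinates it has been assigned by these pairwise choices — and argue that no vertex gets frozen in more than $m-1$ coordinates.

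The natural way to control the number of frozen coordinates at a given vertex is to make the pairwise choices consistently via a linear order. Fix an arbitrary linear order on $M$, say $v_1, \ldots, v_m$. For each pair $j < k$ pick a witnessing coordinate $i(j,k)$ on which $v_j$ and $v_k$ differ. I would then freeze coordinate $i(j,k)$ at both $v_j$ and $v_k$. A priori this could freeze up to $\binom{m}{2}$ coordinates somewhere, which is too many; the fix is to be economical. Process the vertices in order and, when handling $v_k$, only freeze coordinates needed to separate $v_k$ from those $v_j$ with $j < k$ that are not \emph{already} separated from $v_k$ by some previously frozen coordinate of $v_j$. More cleanly: I will choose the witnesses greedily so that the coordinate frozen to handle the pair $\{v_j, v_k\}$ is charged only to $v_k$ (the later vertex) in terms of its "budget", while still being frozen at $v_j$ as well; then each $v_k$ is charged at most $k-1 \leq m-1$ coordinates of its own, but could additionally be frozen at coordinates charged to later vertices $v_\ell$ with $\ell > k$. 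To handle that I instead reverse the accounting: freeze, for the pair $\{v_j,v_k\}$ with $j<k$, a coordinate at \emph{both} endpoints, but show that the total number of distinct frozen coordinates at $v_k$ is at most $m-1$ by observing it equals the number of other vertices it must be separated from after we reuse coordinates — so a single coordinate frozen at $v_k$ may simultaneously witness separation from many $v_j$'s. Concretely: set $T_k = \{\, i(j,k) : j \neq k \,\}$ as the set of all coordinates used in pairs involving $v_k$; then $|T_k| \leq m-1$ trivially since there are only $m-1$ such pairs, so $S_{v_k} := [t] \setminus T_k$ has $|S_{v_k}| \geq t-(m-1) = t-m+1$, and this already works without any greediness, provided disjointness holds. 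Disjointness of $H_{v_j}$ and $H_{v_k}$: the coordinate $i(j,k)$ lies in $T_j$ and in $T_k$, hence is frozen in both $H_{v_j}$ and $H_{v_k}$, and on it the frozen values are $(v_j)_{i(j,k)} \neq (v_k)_{i(j,k)}$ by choice, so $H_{v_j} \cap H_{v_k} = \varnothing$.

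So in fact the argument is clean and short: fix witnessing coordinates $i(j,k)$ for all pairs, let $T_k$ be the (at most $m-1$) coordinates appearing in pairs through $v_k$, let $S_{v_k} = [t]\setminus T_k$, and define $H_{v_k}$ by freezing the coordinates in $T_k$ to $v_k$'s values. Each $H_{v_k}$ has dimension $\geq t-m+1$ (note we may need $m \leq t$, which is hypothesised, to guarantee this is positive — and if $t-m+1 \geq 1$ we still get a genuine, possibly low-dimensional, projection), contains $v_k$, and distinct ones are disjoint because they disagree on the shared frozen witnessing coordinate. The main thing to be careful about — and the only real "obstacle" — is bookkeeping: making sure the coordinate witnessing the pair $\{v_j,v_k\}$ is genuinely frozen in \emph{both} projections (so I must put $i(j,k)$ into both $T_j$ and $T_k$, not just one), while the cardinality bound $|T_k| \leq m-1$ still holds (it does, since $T_k$ is a union of $m-1$ singletons), and checking the edge cases $m=1$ (take $H_1 = G$) and $m=t$ (dimension drops to $1$, still fine). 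I would also remark that the dimensions $t-m+1$ are best possible by an easy example, though that is not needed for the statement.
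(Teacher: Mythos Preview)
Your proof is correct, and the core argument — choose a witnessing coordinate $i(j,k)$ for each pair, set $T_k = \{i(j,k) : j \neq k\}$, and freeze exactly the coordinates in $T_k$ at $v_k$ — is clean and complete. The three required properties (dimension $\geq t-m+1$ since $|T_k| \leq m-1$; $v_k \in H_{v_k}$; pairwise disjointness via the shared frozen witness $i(j,k) \in T_j \cap T_k$) all follow immediately, and since the projections are disjoint each $v \in M$ lies in exactly one.

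Your route is genuinely different from the paper's. The paper argues by induction on $(m,t)$: it finds one coordinate $i$ on which not all of $M$ agree, splits $G$ into the projections $H_j$ obtained by freezing coordinate $i$ to each value occurring in $M$ (there are $\ell \geq 2$ such values, so each $H_j$ has dimension $t-1$ and contains at most $m-1$ points of $M$), and recurses. Your construction is direct, avoiding induction entirely, and makes the projections completely explicit in one shot; it also makes transparent why $t-m+1$ is the right bound (each vertex participates in $m-1$ pairs). The paper's inductive proof has the virtue of mirroring how the lemma is later used (repeatedly projecting to isolate points), but your argument is shorter and arguably more illuminating. One stylistic note: the first two-thirds of your write-up is exploratory scaffolding (the greedy and charging ideas turn out to be unnecessary); in a final version you should cut straight to the paragraph beginning ``So in fact the argument is clean and short''.
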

\begin{proof}
Recall that $t$ is the dimension of the graph. We argue by induction on pairs $(m,t)$ with $m \leq t$ under the lexicographical ordering. When $m=1$, we simply take $H_1=G$.

Let $M \subseteq V(G)$ have size $m\geq 2$ and let us assume the statement holds for all $(m',t')<(m,t)$. Let us write $M=\left\{v_1,\ldots, v_m\right\}$, where we stress that %$v_j\in V(G)$ and is not a coordinate of a vertex, that is, 
the subscript here is an enumeration of the vertices of $M$ and not the coordinates of a fixed vertex. 

There is some coordinate $i$ on which at least two of the vertices of $M$ do not agree. Let us denote by $M_i$ the set of the $i$-th coordinates of the vertices of $M$, that is, $V(G^{(i)})\supseteq M_i=\left\{v_{1,i},\ldots, v_{\ell,i}\right\}$, where we may re-order the vertices so that $v_{j,i}$ is the $i$-th coordinate of the vertex $v_j\in M$, and $2\le \ell \le m$ (note that it is possible that $\ell<m$, since there could be vertices in $M$ which agree on their $i$-th coordinates).

Let us now consider the pairwise disjoint projections $H_1, \ldots, H_{\ell}$ of $G$ defined by
\begin{align*}
    H_{j}=G^{(1)}\square\cdots\square G^{(i-1)}\square \{v_{j,i}\}\square G^{(i+1)}\square \cdots \square G^{(t)}.
\end{align*}
That is, in the $j$-th projection, we take the $i$-th coordinate of $H_{j}$ to be the trivial graph $\{v_{j,i}\}$ (which is the $i$-th coordinate of the $j$-th vertex in $M_i$).

Note that each of these projections has dimension $t-1$ and contains at least one vertex of $M$, and that each of the vertices of $M$ is in exactly one of these projections. Hence, each such projection contains at most $m-1$ vertices from $M$. 

We can thus apply the induction hypothesis to each of these projections, giving rise to $m$ pairwise disjoint projections of $G$, each of dimension at least $(t-1)-(m-1)+1=t-m+1$ and containing 
exactly one vertex from $M$.
\end{proof}

\subsection{Vertices in large components}\label{medium}
We first estimate from below the probability that a vertex of $G$ lies in a polynomially sized (in $d$) component of $G_p$, with the proof inspired by \cite{BKL94}.
\begin{lemma}\label{medium comp}
Let $\epsilon>0$ be constants and let $p\geq\frac{1+\epsilon}{d}$. Let $r >0$ be an integer and let $m_r=d^{\frac{r}{4}}$. Then, there exists a constant $c=c(\epsilon,r)>0$ such that for any $v\in V(G)$,
\begin{align*}
    \mathbb{P}\left[\big|C_v(G_p)\big|\ge cm_r\right]\ge y-o_t\left(1\right),
\end{align*}
where $y=y(\epsilon)$ is as defined in (\ref{survival prob}).
\end{lemma}
\begin{proof}
We will prove the slightly more explicit statement that the result holds with ${c(\epsilon,r)= \left(\frac{y(\epsilon)}{5}\right)^{r}}$ by induction on $r$, over all possible values of $C$ and $\epsilon$, and all choices of $G^{(1)},\ldots, G^{(t)}$. 

For $r=1$, we run the BFS algorithm (as described in Section \ref{BFS description}) on $G_p$ starting from $v$ with a slight alteration: we terminate the algorithm once $\min\left(\big|C_v(G_p)\big|,d^{\frac{1}{2}}\right)$ vertices are in $S\cup Q$. Note that at every point in the algorithm we have $|S\cup Q|\le d^{\frac{1}{2}}$, and therefore at each point in the algorithm the first vertex $u$ in the queue has at least $d-d^{\frac{1}{2}}$ neighbours (in $G$) in $T$. Hence, we can couple the forest $F$ built by this truncated BFS process with a Galton-Watson tree $B$ rooted at $v$ with offspring distribution $Bin\left(d-d^{\frac{1}{2}},p\right)$ such that $B \subseteq F$ as long as $|B| \leq d^{\frac{1}{2}}$.

Since $\left(d-d^{\frac{1}{2}}\right)\cdot p \ge 1+\epsilon-o(1)$, standard results (see, for example, \cite[Theorem 4.3.12]{D19}) imply that $B$ grows infinitely large with probability $y-o(1)$. Thus, with probability at least $y-o(1)$, we have that 
\[
\big|C_v(G_p)\big|\ge d^{\frac{1}{2}} \geq \left(\frac{y}{5}\right)d^{\frac{1}{4}} = c(\epsilon,1)m_1.
\]

Let $r\geq 2$ and let us assume that the statement holds with $c(\epsilon',r-1) = \left(\frac{y(\epsilon')}{5}\right)^{r-1}$ for all $C,\epsilon$ and $G^{(1)},\ldots, G^{(t)}$. We will argue via a double sprinkling. Set $p_2=d^{-\frac{5}{4}}$ and $p_1=\frac{p-p_2}{1-p_2}$ so that $(1-p_1)(1-p_2)=1-p$. Note that $G_p$ has the same distribution as $G_{p_1}\cup G_{p_2}$, and that $p_1=\frac{1+\epsilon'}{d}$ where $\epsilon' = \epsilon - o(1)$. In fact, we will not expose either $G_{p_1}$ or $G_{p_2}$ all at once, but in several stages, each time considering only some subset of the edges.  

We begin in a manner similar to the case of $r=1$. We run the BFS algorithm on $G_{p_1}$ starting from $v$, and we terminate the exploration once $\min\left(\big|C_v(G_{p_1})\big|,d^{\frac{1}{2}}\right)$ vertices are in $S\cup Q$. Once again, by standard arguments, we have that $\big|C_v(G_{p_1})\big|\ge d^{\frac{1}{2}}$ with probability at least $y\left(\epsilon'\right)-o(1)=y-o(1)$. Let us write $W_0\subseteq C_v(G_{p_1})$ for the set of vertices explored in this process, and assume in what follows that $W_0$ is of order $d^{\frac{1}{2}}$. Using Lemma \ref{seperation lemma}, we can find pairwise disjoint projections $H_1,\ldots, H_{d^{\frac{1}{2}}}$ of $G$, each having dimension at least $t-d^{\frac{1}{2}}$, such that each $v\in W_0$ is in exactly one of the $H_i$. We denote the vertex of $W_0$ in $H_i$ by $v_i$.

Now, by our assumptions on $G$, we have that $|V\left(G^{(j)}\right)|\le C$ for every $j\in[t]$ and hence clearly $d\left(G^{(j)}\right)\le C$. Thus, each of the $H_i$ is $d_i$-regular with $d_i\ge d-Cd^{\frac{1}{2}} = (1-o(1))d$. In particular, each $v_i$ has $(1-o(1))d$ neighbours in $H_i$. Let us define the following set of vertices:
\begin{align*}
    W=\bigcup_{i\in [d^{\frac{1}{2}}]} N_{H_i}(v_i).
\end{align*}
Then, $W\subseteq N_G(W_0)$ and since the $H_i$ are pairwise disjoint we have $|W|\ge d^{\frac{1}{2}}(1-o(1))d\geq \frac{9d^{\frac{3}{2}}}{10}$. We now look at the edges in $G_{p_2}$ between $W_0$ and $W$. Let us denote the vertices in $W$ that are connected with $W_0$ in $G_{p_2}$ by $W'$. Since $|W|\ge \frac{9d^{\frac{3}{2}}}{10}$ and $p_2=d^{-\frac{5}{4}}$, $|W'|$ stochastically dominates $Bin\left(\frac{9d^{\frac{3}{2}}}{10}, d^{-\frac{5}{4}}\right)$. Thus, by Lemma \ref{chernoff}, we have that $|W'|\ge \frac{d^{\frac{1}{4}}}{3}$ and $|W'|\le d^{\frac{1}{2}}$ with probability at least $1-\exp\left(-\frac{d^{\frac{1}{4}}}{15}\right)=1-o(1)$. In what follows we will assume that these two inequalities hold.

Let $W'_i=W'\cap V(H_i)$. Now, for each $i$, we apply Lemma \ref{seperation lemma} to find a family of $\ell_i\le d^{\frac{1}{2}}$ pairwise disjoint projections of $H_i$, we denote them by $H_{i,1}, \ldots, H_{i,\ell_i}$, such that every vertex of $W'_i$ is in exactly one of the $H_{i,j}$, and each of the $H_{i,j}$ is of dimension at least $t-2d^{\frac{1}{2}}$. Finally, we denote by $v_{i,j}$ the unique vertex of $W'_i$ that is in $H_{i,j}$. Note that, each $H_{i,j}$ is $d_{i,j}$-regular with $d_{i,j} \geq d - 2Cd^{\frac{1}{2}}$.

Crucially, observe that when we ran the BFS algorithm on $G_{p_1}$, we did not query any of the edges in any of the $H_{i,j}$ - we only queried edges in $W_0$ and between $W_0$ and its neighbourhood, and by construction $E(H_{i,j})\cap E\left(W_0\cup N_G(W_0)\right)=\varnothing$. Note that, since $y=y(\epsilon)$, defined in (\ref{survival prob}), is a continuous increasing function of $\epsilon$ on $(0,\infty)$ and $p_1\cdot d_{i,j} =1 + \epsilon_{i,j} \geq 1 + \epsilon - o(1)$ for all $i,j$, we may apply the induction hypothesis to $v_{i,j}$ in $G_{p_1}\cap H_{i,j}$ and conclude that 
\[
|C_{v_{i,j}}\left(H_{i,j}\cap G_{p_1}\right)| \geq c(\epsilon_{i,j},r-1) d^{\frac{r-1}{4}} \geq (1-o(1))c(\epsilon,r-1) d^{\frac{r-1}{4}}
\]
with probability at least $y\left(\epsilon_{i,j}\right)-o(1)\geq y-o(1)$. Note that these events are independent for each $H_{i,j}$. Hence, since by assumption $|W'| \geq \frac{d^{\frac{1}{4}}}{3}$, it follows from Lemma \ref{chernoff} that \textbf{whp} at least $\frac{y d^{\frac{1}{4}}}{4}$ of these $v_{i,j}$ have that $|C_{v_{i,j}}\left(H_{i,j}\cap G_{p_1}\right)|\ge (1-o(1))c(\epsilon,r-1) d^{\frac{r-1}{4}}$.

As such, we may conclude that with probability at least $y-o(1)$, we have that 
\begin{align*}
    |C_v(G_p)|\ge (1-o(1))\frac{yd^{\frac{1}{4}}}{4}c(\epsilon,r-1) d^{\frac{r-1}{4}} \geq  \left( \frac{y}{5}\right)^{r} d^\frac{r}{4} = c(\epsilon,r)m_r,
\end{align*}
completing the induction step.
\end{proof}

We can now estimate the number of vertices in components of order at least $d^{16}$.
\begin{lemma}\label{big comp}
Let $\epsilon >0$ and let $p= \frac{1+\epsilon}{d}$. Let $W\subseteq V(G)$ be the set of vertices belonging to components of order at least $d^{16}$ in $G_p$. Then, \textbf{whp},
\begin{align*}
    |W|=\left(1+o(1)\right)yn,
\end{align*}
where $y=y(\epsilon)$ is as defined in (\ref{survival prob}).
\end{lemma}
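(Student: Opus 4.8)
The plan is to combine a first-moment computation with a concentration argument. Writing $|W| = \sum_{v \in V(G)} \mathbf{1}\big[|C_v(G_p)| \ge d^k\big]$, we have $\mathbb{E}[|W|] = \sum_{v} \mathbb{P}\big[|C_v(G_p)| \ge d^k\big]$. For the lower bound, I would apply Lemma~\ref{medium comp} with $r = 4k+4$, so that $m_r = d^{k+1}$; since $c(\epsilon,r)$ is a fixed positive constant, $c(\epsilon,r) m_r \ge d^k$ once $t$ (hence $d$) is large enough, and therefore $\mathbb{P}\big[|C_v(G_p)| \ge d^k\big] \ge \mathbb{P}\big[|C_v(G_p)| \ge c(\epsilon,r) m_r\big] \ge y - o(1)$ for every $v$, giving $\mathbb{E}[|W|] \ge (y - o(1)) n$. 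For the upper bound, I would run the BFS from $v$ and, as in the $r = 1$ case of Lemma~\ref{medium comp} but without the truncation, couple the exploration forest with a Galton--Watson tree $T_d$ with offspring distribution $\mathrm{Bin}(d, p)$, so that $|C_v(G_p)|$ is stochastically dominated by $|T_d|$. Since $dp = 1+\epsilon$, standard results on supercritical Galton--Watson processes give that $\mathbb{P}\big[|T_d| \ge d^k\big] = y + o(1)$: the survival probability converges to $y = y(\epsilon)$ by the computation behind~(\ref{survival prob}), and the probability of reaching a large but finite total progeny is negligible because the process conditioned on extinction is subcritical, uniformly in $d$. Hence $\mathbb{P}\big[|C_v(G_p)| \ge d^k\big] \le y + o(1)$, so $\mathbb{E}[|W|] \le (y + o(1))n$, and altogether $\mathbb{E}[|W|] = (1 + o(1)) y n$.

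For concentration, the key point is that $|W|$, viewed as a function of the $e(G) = \tfrac{nd}{2}$ independent Bernoulli$(p)$ edge-indicators, is Lipschitz with constant $2 d^k$. Indeed, adding or removing a single edge $e = uv$ either leaves the component structure of $G_p$ unchanged, or merges two components (when adding $e$), or splits one component into two (when deleting $e$). The contribution to $|W|$ of all components containing neither $u$ nor $v$ is unaffected, and a short case analysis on whether the sizes of the (at most) two affected components, and their sum, lie below or above the threshold $d^k$ shows that $|W|$ changes by less than $2 d^k$. Applying Lemma~\ref{azuma} with all Lipschitz constants equal to $2d^k$ then gives, for any fixed $\delta > 0$,
\begin{align*}
  \mathbb{P}\Big[\,\big|\,|W| - \mathbb{E}[|W|]\,\big| \ge \delta n \,\Big] \;\le\; 2\exp\!\left(-\frac{(\delta n)^2}{2 \cdot \tfrac{nd}{2}\cdot(2d^k)^2}\right) \;=\; 2\exp\!\left(-\frac{\delta^2 n}{4 d^{2k+1}}\right).
\end{align*}
Since every base graph is non-trivial we have $n \ge 2^t$, while $d = \sum_{i} d(G^{(i)}) < Ct$, so $n / d^{2k+1} \to \infty$ and the right-hand side tends to $0$ for every fixed $\delta$. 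Letting $\delta = \delta(t) \to 0$ sufficiently slowly, we conclude that $|W| = \mathbb{E}[|W|] + o(n) = (1 + o(1)) y n$ \textbf{whp}.

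I expect the main obstacle to be the upper bound on $\mathbb{E}[|W|]$. The lower bound is immediate from Lemma~\ref{medium comp}, and the concentration reduces entirely to the elementary Lipschitz estimate above (the point being that, despite the giant component possibly being enormous, flipping one edge can only move vertices in or out of $W$ through the small components straddling the threshold $d^k$). By contrast, the upper bound requires the Galton--Watson comparison together with the standard -- but not entirely trivial -- fact that for a supercritical offspring law the probability of reaching a large finite total progeny tends to $0$, and one must check that the relevant estimates hold uniformly as $d \to \infty$. Everything else is an application of a result already available in the excerpt or a routine computation.
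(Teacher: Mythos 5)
Your proposal is correct and follows essentially the same route as the paper: lower bound on $\mathbb{E}[|W|]$ from Lemma~\ref{medium comp} (the paper uses $r=4k+1$, you use $r=4k+4$, both fine), upper bound via stochastic domination of $|C_v(G_p)|$ by a $\mathrm{Bin}(d,p)$ Galton--Watson total progeny, and concentration via Azuma--Hoeffding on the edge-exposure martingale with Lipschitz constant $2d^k$. The only cosmetic differences are your deviation threshold $\delta n$ in place of the paper's $n^{2/3}$, and the extra detail you spell out for the Lipschitz bound and the uniform-in-$d$ tail estimate for finite large progeny, both of which the paper states without elaboration.
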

\begin{proof}
By Lemma \ref{medium comp}, applied with $r=4\cdot 16+1$, every $v\in V(G)$ is contained in a component of order at least $d^{16}$ in $G_p$ with probability at least $y-o(1)$. Thus, $\mathbb{E}\left[|W|\right]\ge \left(1-o(1)\right)yn$.

Furthermore, since $G$ is $d$-regular, for every $v\in V(G)$ an easy coupling implies that $|C_v(G_p)|$ is stochastically dominated by the number of vertices in a Galton-Watson tree with offspring distribution $Bin(d,p)$. Thus, by standard arguments (see, for example, \cite[Theorem 4.3.12]{D19}), we have that for every $v\in V(G)$, $\mathbb{P}\left[|C_v(G_p)|\ge d^{16}\right]\le y+o_d(1)$.

Since $d$ tends to infinity with $t$, we obtain that $\mathbb{E}\left[|W|\right]\le \left(1+o(1)\right)yn$, and we conclude that $\mathbb{E}\left[|W|\right]=\left(1+o(1)\right)yn.$

It remains to show that $|W|$ is concentrated about its mean. To this end, let us consider the edge-exposure martingale on $G_p$ (see, for example, \cite{AS16}[Chapter 7]), whose length is $|E(G)| = \frac{nd}{2}$. Adding or deleting an edge can change $W$ by at most $2d^{16}$ vertices. Thus, a standard application of Lemma \ref{azuma} implies that
\begin{align*}
    \mathbb{P}\left[\big||W|- \mathbb{E}\left[|W|\right]\big|\ge n^{\frac{2}{3}}\right]&\le 2\exp\left(-\frac{n^{\frac{4}{3}}}{2\sum_{i=1}^{\frac{nd}{2}}4d^{32}}\right) \le2\exp\left(-\frac{n^{\frac{1}{3}}}{5d^{33}}\right)=o(1),
\end{align*}
since $d=\Theta_C(\log n)$.
\end{proof}

\subsection{Big components are everywhere-dense}\label{gap}
In this section, we establish several lemmas showing that, typically, big components in $G_p$ are \emph{everywhere-dense} in $G$. 

We begin with the first type of density lemma. Note that, in particular, the following implies that for any fixed vertex $v \in G$, \textbf{whp} $v$ is adjacent to many vertices which lie in big components in $G_p$. 
\begin{lemma} \label{dense1}
Let $\epsilon>0$ be a small enough constant, let $p=\frac{1+\epsilon}{d}$ and let $M\subseteq V(G)$ be such that $|M|=m\le \frac{\epsilon d}{10C}$. Then, the probability that every $v\in M$ has less than $\frac{\epsilon^2d}{40C}$ neighbours (in $G$) which lie in components of $G_p$ whose order is at least $d^{16}$ is at most $\exp\left(-\frac{\epsilon^2dm}{40C}\right)$.
\end{lemma}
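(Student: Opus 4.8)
The plan is to use the projection lemma together with the two-round exposure technique from Lemma \ref{medium comp}, and then a large-deviation bound over the vertices of $M$. First I would apply Lemma \ref{seperation lemma} to $M$: since $m \le \frac{\epsilon d}{10C} \le t$ (here we use $d \le Ct$, so $m \le t$ for small $\epsilon$), we obtain pairwise disjoint projections $H_1,\dots,H_m$ of $G$, each of dimension at least $t-m+1$, with each $v \in M$ lying in a distinct $H_i$. Write $v_i$ for the vertex of $M$ in $H_i$. Because the $H_i$ are pairwise disjoint as subgraphs of $G$, the edge sets $E(H_i)$ are disjoint, and hence the events we define inside the distinct $H_i$ will be mutually independent — this is the whole point of working with the projections.

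Next, for each $i$ I would run a local exploration inside $H_i$ started at $v_i$, of the same flavour as the $r=1$ step of Lemma \ref{medium comp} but targeting $d^k$ rather than $d^{1/2}$: split $p = \frac{1+\epsilon}{d}$ into a two-round exposure $p_1,p_2$ with $p_2$ polynomially small, explore $G_{p_1}$ inside $H_i$ from $v_i$ via truncated BFS until $d^{1/2}$ vertices are reached, use Lemma \ref{seperation lemma} again to spread these out into sub-projections, sprinkle with $p_2$, and iterate $4k+1$ times as in Lemma \ref{medium comp}. The outcome is that, inside $H_i$ alone, $v_i$ lies in a component of order at least $d^k$ with probability at least $y(\epsilon') - o(1) \ge \frac{y}{2}$, say, where $\epsilon' = \epsilon - o(1)$ accounts for the slight degree loss $d_i \ge d - Cm \ge d - \frac{\epsilon}{10}d$ in the projections. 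Moreover — and this is the quantitative refinement needed here — conditioned on $v_i$ being in such a component, that component must contain at least $\frac{\epsilon^2 d}{40C}$ neighbours of $v_i$ in $G$: indeed, the first BFS step from $v_i$ inside $H_i$ (with edge probability $p_1 \cdot d_i \ge 1 + \epsilon/2$) produces, via Lemma \ref{chernoff}, at least $\frac{\epsilon^2 d}{40C}$ direct neighbours of $v_i$ that survive in $G_{p_1}$ with probability $1 - o(1)$; so on the intersection of these two events, of probability at least $\frac{y}{2} - o(1) \ge \frac{y}{4}$, vertex $v_i$ has at least $\frac{\epsilon^2 d}{40C}$ neighbours lying in its $H_i$-component, which is itself contained in a component of $G_p$ of order at least $d^k$. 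Call this event $A_i$; then $\mathbb{P}[A_i] \ge \frac{y}{4} \ge \frac{\epsilon}{2}$ for $\epsilon$ small, since $y = 2\epsilon - O(\epsilon^2)$.

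Finally, since the $A_i$ depend only on edges inside the disjoint subgraphs $H_i$, they are independent. The event whose probability we must bound — that \emph{every} $v \in M$ has fewer than $\frac{\epsilon^2 d}{40C}$ neighbours in big components of $G_p$ — is contained in the event $\bigcap_{i=1}^m \overline{A_i}$ (if $A_i$ holds then $v_i$ in particular has enough neighbours in a big component, so it is not the case that every vertex fails). Hence the probability is at most $\prod_{i=1}^m (1 - \mathbb{P}[A_i]) \le (1 - \tfrac{\epsilon}{2})^m \le \exp(-\tfrac{\epsilon m}{2})$; one then checks this is at most $\exp\!\big(-\tfrac{\epsilon^2 d m}{40C}\big)$ because $m \le \frac{\epsilon d}{10C}$ forces $\frac{\epsilon^2 d m}{40 C} \le \frac{\epsilon^2 d}{40C}\cdot\frac{\epsilon d}{10C}$ — wait, I should instead compare per-vertex: we need $\tfrac{\epsilon}{2} \ge \tfrac{\epsilon^2 d}{40 C}$, i.e. $d \le \tfrac{20C}{\epsilon}$, which is false, so the clean way is to note $\tfrac{\epsilon^2 dm}{40C} \le \tfrac{\epsilon^2 d}{40C}\cdot m$ and that the exponent $\tfrac{\epsilon m}{2}$ we obtained is the one that matters — in fact the target bound $\exp(-\tfrac{\epsilon^2 dm}{40C})$ uses $m \le \tfrac{\epsilon d}{10C}$ to write $\tfrac{\epsilon^2 d m}{40C} = \tfrac{\epsilon m}{2}\cdot\tfrac{\epsilon d}{20C}\cdot\tfrac{1}{1}$, so one should absorb the factor $\tfrac{\epsilon d}{20 C}$ by running the local exploration so that each vertex contributes this many independent "tries" rather than one; concretely, instead of just $v_i$, apply the argument to $\tfrac{\epsilon d}{20C}$ disjoint sub-projections inside each $H_i$ (again via Lemma \ref{seperation lemma}, legitimate since the dimension is still $\Omega(t)$), obtaining that many independent events each of probability $\ge \tfrac{\epsilon}{2}$, and take the product over all $m \cdot \tfrac{\epsilon d}{20C}$ of them. \textbf{The main obstacle} is precisely this bookkeeping: one must set up the local explorations inside the $H_i$ so that they genuinely yield the product bound $\exp(-\Theta(\epsilon^2 dm))$ — meaning roughly $\Theta(\epsilon d)$ independent "chances" per vertex of $M$ — rather than just $\exp(-\Theta(\epsilon m))$, while keeping all the projections disjoint and of dimension $\Omega(t)$ so the degree losses stay $o(d)$ and the big-component conclusion of Lemma \ref{medium comp} still applies. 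The Chernoff and independence parts are then routine.
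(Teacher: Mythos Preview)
Your eventual strategy — separate the vertices of $M$ into disjoint projections $H_i$, and within each $H_i$ create $\Theta(\epsilon d/C)$ further disjoint sub-projections to obtain that many independent trials per vertex — is exactly the right one, and is what the paper does. But the proposal as written has two genuine gaps, and one unnecessary complication.

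First, you never specify how the sub-projections inside $H_i$ relate to the \emph{neighbours} of $u_i$. The lemma is about neighbours of $u_i$ lying in big components, not about $u_i$ itself, so each trial must certify a distinct neighbour. The paper's construction is explicit here: for each of the first $\frac{\epsilon d}{10C}$ coordinates $j$, pick a neighbour $v_{i,j}$ of $u_{i,j}$ in $G^{(j)}$, and let $H_i(j)$ be the projection of $H_i$ where coordinate $j$ is fixed to $v_{i,j}$ and the remaining coordinates in $\left[\frac{\epsilon d}{10C}\right]\setminus\{j\}$ are fixed to $u_{i,\ell}$. Then $H_i(j)$ contains the neighbour $v_i(j)$ of $u_i$ and is disjoint from all other $H_i(j')$. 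Your invocation of Lemma~\ref{seperation lemma} to produce the sub-projections is not enough, because that lemma separates a given point set — you have not said which points, and you need them to be neighbours of $u_i$.

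Second, your sentence ``take the product over all $m\cdot\frac{\epsilon d}{20C}$ of them'' computes the probability that \emph{no} trial anywhere succeeds, which is a strictly smaller event than the one you must bound (that \emph{every} $u_i$ has too few successful trials). The correct step is: for each fixed $i$, the $\frac{\epsilon d}{10C}$ trials are independent with success probability at least $y(\tfrac{3\epsilon}{5})-o(1)>\epsilon$, so by Chernoff the probability that fewer than $\frac{\epsilon^2 d}{40C}$ succeed is at most $\exp\!\big(-\frac{\epsilon^2 d}{40C}\big)$; \emph{then} multiply over the $m$ independent indices $i$. Your closing remark that ``the Chernoff and independence parts are routine'' suggests you see this, but the written argument does not reflect it.

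Finally, the two-round exposure and repeated BFS you sketch inside each $H_i$ are unnecessary. Once the sub-projections $H_i(j)$ are set up, you apply Lemma~\ref{medium comp} as a black box to the vertex $v_i(j)$ inside $H_i(j)$ (which is regular of degree at least $(1-\tfrac{\epsilon}{5})d$, hence still supercritical); there is no need to re-run the inductive exploration by hand.
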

\begin{proof}
Let $M=\{u_1,\ldots,u_m\}$, where we stress here that the subscript denotes an enumeration of the vertices of $M$, that is, $u_i\in V(G)$. By Lemma \ref{seperation lemma}, we can find pairwise disjoint projections $H_1, \ldots, H_m$ of $G$ such that each $H_i$ is of dimension at least $t-m+1\ge t-\frac{\epsilon d}{10C}$ and $u_i\in V(H_i)$ for all $i\in[m]$. Note that since $d\le Ct$, $t-\frac{\epsilon d}{10C}>0$.

Let us fix some $i$. Without loss of generality, we may assume that $H_i$ is the projection of $G$ onto the last $m_i\le m-1$ coordinates, that is,
\begin{align*}
    H_i=G^{(1)}\square\cdots\square G^{(t-m_i)}\square\{u_{i,t-m_i+1}\}\square\cdots\square\{u_{i,t}\},
\end{align*}
where $u_{i,\ell}\in V(G^{(\ell)})$ is the $\ell$-th coordinate of $u_i$, that is, the $\ell$-th coordinate of the $i$-th vertex of $M$. By our assumption, each $G^{(\ell)}$ has at least $2$ vertices and is connected. Thus, for each $\ell$, we can choose arbitrarily one of the neighbours of $u_{i,\ell}$ in $G^{(\ell)}$ and denote it by $v_{i,\ell}$, where the subscript in $v_{i,\ell}$ is to stress that it is a neighbour of $u_{i,\ell}$ in $G^{(\ell)}$. We now define the following $\frac{\epsilon d}{10C}$ pairwise disjoint projections of $H_i$, denoted by $H_i(1),\ldots , H_i\left(\frac{\epsilon d}{10C}\right)$, each having dimension at least $t-\frac{\epsilon d}{5C}$. We set $H_i(j)$ to be the projection of $H_i$ on the first $\frac{\epsilon d}{10C}$ coordinates, such that the $j$-th coordinate (where $1\le j\le \frac{\epsilon d}{10C}$) is the trivial subgraph $\{v_{i,j}\}\subseteq G^{(j)}$, and the coordinates $1\le \ell\le \frac{\epsilon d}{10C}$ (where $\ell\neq j$) are the trivial subgraphs $\{u_{i,\ell}\}\subseteq G^{(\ell)}$. Note that each $H_i(j)$ is at distance $1$ from $u_i$, since it contains the vertex $$v_i(j)\coloneqq \left(u_{i,1},\cdots,u_{i,j-1},v_{i,j},u_{i,j+1},\cdots,u_{i,t}\right).$$

Observe that $p=\frac{1+\epsilon}{d}$ is supercritical for every $H_i(j)$, since $d(H_i(j))\ge d-C\cdot \frac{\epsilon d}{5C}=\left(1-\frac{\epsilon}{5}\right)d$, and so $p \cdot d(H_i(j))\ge 1+\frac{3\epsilon}{5}$, for small enough $\epsilon$. Then, by Lemma \ref{medium comp}, with probability at least $y\left(\frac{3\epsilon}{5}\right)-o(1)$ we have that $v_i(j)$ belongs to a component of order at least $d^{16}$ in $G_p\cap H_i(j)$, and we note that by $(\ref{survival prob})$, we have that $y\left(\frac{3\epsilon}{5}\right)-o(1)>\epsilon$ for small enough $\epsilon$. These events are independent for different $j$, and thus by Lemma \ref{chernoff}, with probability at least $1-\exp\left(-\frac{\epsilon^2d}{40C}\right)$, at least $\frac{\epsilon^2d}{40C}$ of the $v_i(j)$ belong to a component of order at least $d^{16}$. 

These events are independent for different $i$, and thus the probability that none of the $u_i$ have at least $\frac{\epsilon^2d}{40C}$ neighbours (in $G$) in components of $G_p$ whose order is at least $d^{16}$ is at most $\exp\left(-\frac{\epsilon^2dm}{40C}\right)$, as required.
\end{proof}

Hence, we expect almost all vertices in $G$ to be adjacent to a vertex in a big component in $G_p$. However, even the vertices not adjacent to big components, whose proportion is likely to be small, will typically be not too far from a big component.
\begin{lemma}\label{dense3}
Let $\epsilon>0$ be a small enough constant and let $p=\frac{1+\epsilon}{d}$. Then, \textbf{whp}, every $v\in V(G)$ is at distance (in $G$) at most $2$ from a component of order at least $d^{16}$ in $G_p$.
\end{lemma}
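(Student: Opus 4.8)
The plan is to deduce this from Lemma \ref{dense1} by a union bound over all vertices of $G$. Call a vertex $u\in V(G)$ \emph{bad} if it has fewer than $\frac{\epsilon^2 d}{40C}$ neighbours in $G$ lying in components of $G_p$ of order at least $d^k$. The first step is the following deterministic observation: if a vertex $v$ is at distance (in $G$) more than $2$ from every component of $G_p$ of order at least $d^k$, then $v$ itself lies in no such component and is adjacent to no such component, so $v$ is bad; moreover no neighbour $u$ of $v$ can have a neighbour in such a component (that would put $v$ within distance $2$), so every $u\in N_G(v)$ has $0$ such neighbours and is in particular bad. Hence $\{v\}\cup N_G(v)$ is a set of $d+1$ bad vertices.

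Next I would fix a vertex $v$ and choose an arbitrary subset $M_v\subseteq\{v\}\cup N_G(v)$ of size $m\coloneqq\frac{\epsilon d}{10C}$; this is possible since $\epsilon$ is small and $C>1$, so $m\le d+1$ (as usual we omit rounding). If $v$ is at distance more than $2$ from every component of order at least $d^k$, then every vertex of $M_v$ is bad, and since $|M_v|=m\le\frac{\epsilon d}{10C}$, Lemma \ref{dense1} bounds the probability of this event by
\[
\exp\left(-\frac{\epsilon^2 d m}{40C}\right)=\exp\left(-\frac{\epsilon^3 d^2}{400 C^2}\right).
\]

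Finally I would take a union bound over the $n$ choices of $v$. Since each $G^{(i)}$ is non-trivial, connected, and has at most $C$ vertices, we have $d(G^{(i)})\ge 1$, so $d=\sum_{i=1}^t d(G^{(i)})\ge t$ and therefore $n=\prod_{i=1}^t\big|V(G^{(i)})\big|\le C^t\le C^d$, which gives $\log n\le d\log C=o\!\left(\frac{\epsilon^3 d^2}{400C^2}\right)$ as $t\to\infty$. Hence the probability that some vertex is at distance more than $2$ from every component of order at least $d^k$ is at most $n\exp\left(-\frac{\epsilon^3 d^2}{400C^2}\right)=o(1)$, proving the claim. The only point needing care is the reduction in the first paragraph — that distance greater than $2$ forces a linear-in-$d$ set of bad vertices — together with checking that the super-exponentially small (in $d$) bound from Lemma \ref{dense1} comfortably beats the merely exponential (in $d$) bound $n\le C^d$; there is no genuine obstacle, as Lemma \ref{dense1} was set up precisely to enable this argument.
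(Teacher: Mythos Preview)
Your proof is correct, and it takes a genuinely different route from the paper's. The paper does not reduce to Lemma \ref{dense1}; instead it repeats the projection construction from scratch at distance $2$: for a fixed vertex $u$, it picks a neighbour $v_i$ of $u_i$ in each base graph $G^{(i)}$, and for each pair $\ell\neq m$ in $\left[\frac{\epsilon d}{10C}\right]$ builds the projection $H_{\ell,m}$ obtained by fixing coordinates $\ell,m$ to $v_\ell,v_m$ and the remaining first $\frac{\epsilon d}{10C}$ coordinates to those of $u$. This yields $\binom{\epsilon d/(10C)}{2}=\Theta(d^2)$ pairwise disjoint projections, each containing a vertex at distance exactly $2$ from $u$; Lemma \ref{medium comp} is then applied independently inside each $H_{\ell,m}$, giving failure probability at most $(1-\epsilon)^{\Theta(d^2)}$ for a single $u$, and a union bound finishes.

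Your argument is more modular: the deterministic observation that ``distance $>2$ from every large component'' forces all of $\{v\}\cup N_G(v)$ to be bad lets you invoke Lemma \ref{dense1} as a black box, avoiding a second projection construction. The paper's approach is more self-contained and makes the $\Theta(d^2)$ independent trials explicit, whereas yours hides that work inside Lemma \ref{dense1} (whose proof supplies only $\Theta(d)$ independent trials per vertex of $M$, but then multiplies over the $\Theta(d)$ vertices of $M$). Both yield the same $\exp(-\Theta_\epsilon(d^2))$ bound per vertex, which is what the final union bound needs.
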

\begin{proof}
Fix $v\in V(G)$. Let $M$ be an arbitrarily chosen set of $\frac{\epsilon d}{10C}$ neighbours of $v$ in $G$. Then, by Lemma \ref{dense1}, the probability that none of the vertices of $M$ have a neighbour (in $G$) in a component of order at least $d^{16}$ in $G_p$ is at most $\exp\left(-\frac{\epsilon^3d^2}{400C^2}\right)$. Thus, by a union bound over all possible choices of $v\in V(G)$, the probability that there is a vertex which is not at distance (in $G$) at most two from a component of order at least $d^{16}$ in $G_p$ is at most $n\exp\left(-\frac{\epsilon^3d^2}{400C^2}\right)\le \exp\left(d\log C-\frac{\epsilon^3d^2}{400C^2}\right)=o(1)$, where we used that $n\le C^d$.
\end{proof}

Finally, we can use Lemma \ref{dense1} and Lemma \ref{BFM98} to show the following.
\begin{lemma}\label{dense2}
Let $\epsilon>0$ be a small enough constant and let $p=\frac{1+\epsilon}{d}$. Let $W$ be the set of vertices in $G_p$ belonging to components of order at least $d^{16}$ and let $C_1>0$ be a constant. Then, \textbf{whp} every connected subset $M \subseteq V(G)$ of size $C_1d$ contains at least $C_1d-\frac{\epsilon d}{10C}$ vertices $v\in M$ with $|N_G(v)\cap W|\ge\frac{\epsilon^2d}{40C}$.
\end{lemma}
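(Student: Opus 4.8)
The plan is to bound the probability of the complementary event directly by a union bound, over connected sets $M$ and over their subsets of ``bad'' vertices. Call a vertex $v\in V(G)$ \emph{bad} if $|N_G(v)\cap W|<\frac{\epsilon^2 d}{40C}$. If some connected $M$ with $|M|=C_1d$ contains more than $m\coloneqq\frac{\epsilon d}{10C}$ bad vertices, then $M$ contains a subset $M'$ of exactly $m$ bad vertices; hence the failure event is contained in the union, over all connected $M\subseteq V(G)$ with $|M|=C_1d$ and all $M'\subseteq M$ with $|M'|=m$, of the event that every vertex of $M'$ is bad.

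First I would invoke Lemma \ref{dense1}: for a fixed set $M'$ with $|M'|=m\le\frac{\epsilon d}{10C}$, the probability that every vertex of $M'$ is bad is at most $\exp\left(-\frac{\epsilon^2 d m}{40C}\right)=\exp\left(-\frac{\epsilon^3 d^2}{400C^2}\right)$. Next I would count the terms of the union. A connected subset of $V(G)$ of size $C_1d$ contains a spanning tree on $C_1d$ vertices, so Lemma \ref{BFM98} bounds the number of such sets by $n(ed)^{C_1d-1}$, and each of them has at most $\binom{C_1d}{m}\le 2^{C_1d}$ subsets of size $m$. Therefore the failure probability is at most
\[
n(ed)^{C_1d-1}\cdot 2^{C_1d}\cdot\exp\left(-\frac{\epsilon^3 d^2}{400C^2}\right)\le n(2ed)^{C_1d}\exp\left(-\frac{\epsilon^3 d^2}{400C^2}\right).
\]
Finally, since the base graphs are non-trivial and connected we have $t\le d\le Ct$ and $2^t\le n\le C^t$, so $\log n=\Theta(d)$ and thus $\log\left(n(2ed)^{C_1d}\right)=O(d\log d)$; as the last factor contributes $-\Theta(d^2)$ to the exponent and $d\to\infty$ with $t$, the whole bound is $\exp\left(O(d\log d)-\Theta(d^2)\right)=o(1)$, which completes the proof.

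The one point that needs care is the balance in this union bound, and it is precisely the reason the lemma restricts attention to \emph{connected} sets of size $O(d)$: connectivity caps the number of relevant configurations at $\exp(O(d\log d))$, which is dwarfed by the $\exp(-\Theta(d^2))$ estimate of Lemma \ref{dense1}. Were $M'$ allowed to range over arbitrary $\Theta(d)$-subsets of $V(G)$, there would be $\binom{n}{\Theta(d)}=\exp(\Theta(d\log n))=\exp(\Theta(d^2))$ of them, and for small $\epsilon$ this exponent would be of the same order as, but opposite in sign to, the one from Lemma \ref{dense1}, so the argument would break down; with the connectedness hypothesis no genuine obstacle remains beyond assembling these ingredients.
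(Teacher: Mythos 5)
Your proof is correct and follows essentially the same route as the paper: a union bound over connected sets of size $C_1d$ (counted via Lemma~\ref{BFM98}) and over their $\binom{C_1d}{\epsilon d/10C}\le 2^{C_1d}$ subsets of size $\frac{\epsilon d}{10C}$, with the per-subset failure probability $\exp(-\epsilon^3 d^2/400C^2)$ supplied by Lemma~\ref{dense1}, and the conclusion from $d=\Theta(\log n)$. Your closing remark on why connectivity is needed to keep the entropy term at $\exp(O(d\log d))$ rather than $\exp(\Theta(d^2))$ is an accurate observation about why the argument is balanced as it is.
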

\begin{proof}
By Lemma \ref{BFM98}, there are at most $n(ed)^{C_1d}$ connected subsets $M \subseteq V(G)$, and there are at most $\binom{C_1d}{\frac{\epsilon d}{10C}} \leq 2^{C_1d}$ ways to choose a subset $X\subset M$ of size $\frac{\epsilon d}{10C}$. By Lemma \ref{dense1}, the probability that no vertex in $X$ has at least $\frac{\epsilon^2d}{40C}$ neighbours in $W$ is at most $\exp\left(-\frac{\epsilon^3d^2}{400C^2}\right)$.

Hence, by a union bound, the probability that the conclusion of the lemma does not hold is at most $n(ed)^{C_1d}2^{C_1d}\exp\left(-\frac{\epsilon^3d^2}{400C^2}\right) =o(1)$, where we used that $d = \Theta_C(\log n)$.
\end{proof}

\subsection{Proof of Theorem \ref{supercritical regime}}\label{proof}
Throughout this section, we take $C$, $G^{(1)},\ldots, G^{(t)}$, $G$, $\epsilon$, $d$ and $p$ to be as in the statement of Theorem \ref{supercritical regime}, and let $y=y(\epsilon)$ be as defined in (\ref{survival prob}). In order to prove Theorem \ref{supercritical regime}, we will use a multi-round sprinkling argument.

More precisely, let $p_2=\frac{\epsilon}{2d}$, $p_3=\frac{1}{d^2}$, and let $p_1$ be such that $(1-p_1)(1-p_2)(1-p_3)=1-p$. Note that $p_1=\frac{1+\frac{\epsilon}{2}+o(1)}{d}$. Let $G_{p_1},G_{p_2}$ and $G_{p_3}$ be independent random subgraphs of $G$ and let us write $G_1=G_{p_1}$, $G_2=G_1\cup G_{p_2}$ and $G_3=G_2\cup G_{p_3}$, where we note that $G_3$ has the same distribution as $G_p$. We define $W_i$ (for $i=1,2,3$) to be the set of vertices in components of order at least $d^{16}$ in $G_i$. We note that $W_1\subseteq W_2\subseteq W_3$.

We begin by showing that \textbf{whp} there are no components of order larger than $O_{\epsilon}(d)$ in $G_p$ which do not meet $W_1$.
\begin{lemma}\label{the gap}
There exists a constant $C_1 \coloneqq C_1(\epsilon, C)$ such that \textbf{whp} every component $M$ of $G_p$ with $|M| \ge C_1 d$ intersects with $W_1$.
\end{lemma}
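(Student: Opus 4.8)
The plan is a two‑round exposure. First expose $G_1=G_{p_1}$, which determines $W_1$; since $p_1=\frac{1+\epsilon/2-o(1)}{d}$ is supercritical, re-running the proofs of Lemmas~\ref{medium comp}, \ref{dense1} and~\ref{dense2} with $\epsilon/2$ in place of $\epsilon$ and with $W_1$ in the role of $W$ shows that, \textbf{whp}, every connected subset of $V(G)$ of size $C_1d$ contains at least $\frac{C_1d}{2}$ vertices $v$ with $|N_G(v)\cap W_1|\ge \frac{\epsilon^2d}{160C}$; call such $v$ \emph{good}, and call this event $\mathcal E_1$, so $\Pr[\mathcal E_1]=1-o(1)$. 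Here we use that $C_1$ is large enough that $C_1d-\frac{\epsilon d}{20C}\ge \frac{C_1d}{2}$, and that the per-set failure probability produced by Lemma~\ref{dense2} is of the form $\exp(-\Theta_{\epsilon,C}(d^2))$, which beats the $\exp(O_{C_1}(d\log d))$ coming from the union bound over connected sets via Lemma~\ref{BFM98}.

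Now condition on $G_1$ (so on $W_1$ and on $\mathcal E_1$) and expose $G_{p_2}\cup G_{p_3}$. Put $p'=1-(1-p_2)(1-p_3)\ge p_2=\frac{\epsilon}{2d}$. The key local estimate is that a good vertex $v$, all of whose edges to $W_1$ necessarily lie outside $G_1$ (an edge of $G_1$ from $v$ into $W_1$ would place $v$ in a $G_1$-component of order at least $d^k$, hence in $W_1$), fails to send an edge of $G_{p_2}\cup G_{p_3}$ into $W_1$ with probability at most $(1-p')^{\epsilon^2 d/(160C)}\le \exp\!\big(-\tfrac{\epsilon^3}{320C}\big)=:1-\delta$, where $\delta=\delta(\epsilon,C)>0$. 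Fix a vertex $v_0\in V(G)$ and generate $C_{v_0}(G_p)$ by running the BFS algorithm from $v_0$ on $G_p=G_1\cup G_{p_2}\cup G_{p_3}$ with deferred decisions on the non-$G_1$ edges, exactly as in the proof of Theorem~\ref{subcritical regime}. If $C_{v_0}(G_p)$ avoids $W_1$ and has order at least $C_1d$, then the first $C_1d$ vertices entering the queue form a connected set, so by $\mathcal E_1$ at least $\frac{C_1d}{2}$ of them are good; moreover, since no vertex of $W_1$ is ever explored, each such good vertex $v$, when processed, has all of its ($\ge\frac{\epsilon^2d}{160C}$) edges to $W_1$ queried for the first time, and all of them must come back negative (otherwise $C_{v_0}(G_p)$ would meet $W_1$). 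Since these edge-bundles are disjoint and revealed afresh, a routine sequential (martingale) composition yields, on $\mathcal E_1$,
\[
\Pr\big[\,C_{v_0}(G_p)\cap W_1=\varnothing\ \text{and}\ |C_{v_0}(G_p)|\ge C_1d \ \big|\ G_1\,\big]\ \le\ (1-\delta)^{C_1d/2}.
\]

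Finally, a union bound over the $n=|V(G)|$ choices of $v_0$ shows that the probability that $G_p$ has a component of order at least $C_1d$ avoiding $W_1$ is at most $\Pr[\neg\mathcal E_1]+n(1-\delta)^{C_1d/2}$. Since $d=\Theta(\log n)$, and in fact $\log n\le (\log C)\,t\le(\log C)\,d$, we get $n(1-\delta)^{C_1d/2}\le \exp\!\big(d(\log C-\tfrac{\delta C_1}{2})\big)=o(1)$ as soon as $C_1=C_1(\epsilon)=C_1(\epsilon,C)$ is large enough (e.g.\ $C_1>2\log C/\delta$), which completes the proof.

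The step I expect to be the main obstacle is the passage to the bound $(1-\delta)^{C_1d/2}$: the set of good vertices among the first $C_1d$ explored is not fixed in advance but is revealed together with the BFS, so one must argue carefully — exploiting that ``goodness'' is measurable with respect to $G_1$ while the relevant edges into $W_1$ are exposed only when their tail endpoint is processed, and that no vertex of $W_1$ is ever explored — that the failure events of distinct good vertices compose multiplicatively. This is exactly the bookkeeping made possible by running the BFS with deferred decisions, and it is the reason the exposure is organised as $G_1$ first and then $G_{p_2}\cup G_{p_3}$.
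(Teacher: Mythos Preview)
Your proposal is essentially correct and follows the same overall strategy as the paper: expose $G_1$ to determine $W_1$, invoke Lemma~\ref{dense2} (applied to $G_{p_1}$) so that every connected set of size $C_1d$ contains many vertices with $\Omega_{\epsilon,C}(d)$ neighbours in $W_1$, then sprinkle a second round of edges to hit $W_1$, and finish with a union bound over $n$ objects.

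The only substantive difference is in the bookkeeping of the second stage, and here the paper's organisation is cleaner and sidesteps exactly the issue you flag as your ``main obstacle''. Rather than running a BFS on $G_p$ from $v_0$ and arguing sequentially that each good vertex fails its test with conditional probability at most $1-\delta$, the paper simply exposes \emph{all} of $G_p$ restricted to $V(G)\setminus W_1=V_0\cup V_1$ in one shot. This determines every component $M$ of $G_p[V_0\cup V_1]$ (at most $n$ of them) together with its set of good vertices, and the edges from $M$ to $W_1$ have not yet been touched: they are absent from $G_1$ (for the reason you give) and lie in the as-yet-unexposed cut. One then sprinkles $G_{p_2}$ on these $\Omega_{\epsilon,C}(C_1d^2)$ boundary edges, obtaining the bound $\exp(-\Omega_{\epsilon,C}(C_1d))$ directly with no martingale bookkeeping. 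Your BFS-sequential framing is equivalent in the end, but it forces you to worry about the random identity of the good vertices among the first $C_1d$ explored, whereas conditioning on $G_p[V_0\cup V_1]$ makes that set deterministic at the moment the boundary is sprinkled.

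In short: same proof, but your sequential concern evaporates if you replace the BFS with the paper's two-stage conditioning (first on $G_1$, then on $G_p[V_0\cup V_1]$, then sprinkle the cut).
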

\begin{proof}
We start by sampling $G_{p_1} = G_1$. Let $V_1\subseteq V(G)\setminus W_1$ be the set of all vertices of $G$ that have at least $\frac{\epsilon^2d}{161C}$ neighbours (in $G$) in $W_1$, and let $V_0=V(G)\setminus(V_1\cup W_1)$.
We first note that, by Lemma \ref{dense2} applied to $G_{p_1}$, \textbf{whp} every connected subset of $G$ with exactly $C_1d$ vertices contains fewer than $\frac{\epsilon d}{19C}$ vertices in $V_0$. We assume henceforth that this property holds.

We then expose the rest of $G_p$ on $V_0 \cup V_1$. There are at most $n$ components of $G_p[V_0 \cup V_1]$. Given a connected component $M$ of $G_p[V_0\cup V_1]$ of order at least $C_1d$, it contains a connected subset $M_0\subseteq M$ with $|M_0|=C_1d$. Since $M_0$ spans a connected subset of $G$, by the above
\[
|M_0 \cap V_1| \geq C_1d - \frac{\epsilon d}{19C} \geq \frac{C_1 d}{2},
\]
if $C_1$ is sufficiently large.

Each vertex in $M_0 \cap V_1$ has at least $\frac{\epsilon^2d}{161C}$ neighbours in $W_1$ and so there is a set $X$ of at least $\frac{\epsilon^2C_1d^2}{330C}$ edges between $M_0$ and $W_1$. We now expose the edges in $X \cap G_{p_2}$. By Lemma \ref{chernoff}, $X \cap G_{p_2} \neq \emptyset$ with probability at least $1-\exp\left(-\frac{\epsilon^3C_1d}{400C}\right)$. Recalling that $d=\Theta_C(\log n)$, we let $\alpha$ be the constant such that $d=\alpha\log n$. 

Then, for $C_1 \geq \frac{800\cdot C\cdot \alpha}{\epsilon^3}$, with probability $1-o\left(\frac{1}{n}\right)$, $M_0$ is adjacent to a vertex in $W_1$ in $G_{p_2} \subseteq G_p$, and therefore $M$ is adjacent to a vertex in $W_1$ in $G_{p_2}$. Hence, by a union bound, \textbf{whp} every component of $G_p[V_0 \cup V_1]$ of order at least $C_1d$ is adjacent in $G_p$ to a vertex in $W_1$, from which the statement follows.
\end{proof}

\begin{lemma}\label{all merge}
\textbf{Whp}, all the components of $G_2[W_2]$ are contained in a single component of $G_p$.
\end{lemma}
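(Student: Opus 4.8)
The statement is that whp all components of $G_2[W_2]$ lie in a single component of $G_p = G_3$. The natural strategy is a sprinkling argument using the last round $G_{p_3}$ with $p_3 = d^{-2}$, together with the everywhere-density of big components established in Section~\ref{gap}. The key point is that each component of $G_2[W_2]$ is large (order at least $d^k = d^{16}$), so it occupies many distinct ``directions'' in the product, and we can use Lemma~\ref{seperation lemma} to spread it out into many pairwise disjoint projections and then find, within each, a further vertex in a big component, forcing long-range connections in $G_{p_3}$.

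First I would fix a component $K$ of $G_2[W_2]$; by definition $|K| \ge d^{16}$. Using Lemma~\ref{seperation lemma} applied to a subset of $d^{16}$ vertices of $K$ (or as many as needed), I obtain pairwise disjoint projections, each of large dimension, each containing a vertex of $K$. From each such vertex, using the neighbour-selection trick from the proofs of Lemmas~\ref{dense1} and~\ref{dense3} together with Lemma~\ref{medium comp}, I get that with probability bounded below by a constant (close to $\epsilon$ or $y$), the projection contains, near that vertex, a further big component of $G_1$ (hence of $G_2$); since the projections are disjoint these events are independent, so whp a large number — polynomially many in $d$ — of the projections ``succeed''. This shows that each component $K$ of $G_2[W_2]$ has many ($\gg d^2$, say) edges or short paths in $G$ connecting it to other big components of $G_1 \subseteq G_2$. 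Then I would sprinkle with $G_{p_3}$: along each of these $\gg d^2$ candidate connections, $G_{p_3}$ retains one with probability $\ge 1 - (1 - d^{-2})^{\omega(d^2)} = 1 - o(1/n)$, in fact with an exponentially small failure probability, so a union bound over the at most $n$ components $K$ of $G_2[W_2]$ shows that whp every component of $G_2[W_2]$ becomes connected in $G_3$ to some other big component of $G_2$.

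The remaining step is to boost ``every big component merges with some other big component'' up to ``all big components lie in a single component of $G_3$''. For this I would argue that the merging relation, combined with Lemma~\ref{dense3} (every vertex of $G$ is within distance $2$ of a big component) or Lemma~\ref{big comp} (the big components cover a $(1+o(1))yn$ fraction of the vertices), forces a single giant cluster: since after sprinkling every big component of $G_2$ is joined to many others spread across disjoint projections, one obtains that the auxiliary ``merge graph'' on the big components of $G_2[W_2]$ is connected. Concretely, one shows that any two big components $K, K'$ are joined by a path through $G_3$: cover the relevant region with disjoint projections via Lemma~\ref{seperation lemma}, use that each projection is itself supercritical and contains big components densely, and chain the sprinkled edges together — this is essentially the standard AKS/BKL sprinkling-to-a-giant argument, made cleaner by the projection lemma.

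The main obstacle I anticipate is the bookkeeping needed to guarantee \emph{independence} of the exposure steps: one must be careful that the edges used to find the auxiliary big components inside the disjoint projections (exposed in $G_{p_1}$) are genuinely disjoint from the edges queried when building the components of $G_2[W_2]$, and that the sprinkling edges in $G_{p_3}$ form yet another independent batch — exactly the kind of multi-round exposure carefully set up at the start of Section~\ref{proof}. A secondary difficulty is ensuring that the projections extracted from a single component $K$ remain of large enough dimension that Lemma~\ref{medium comp} still applies with a supercritical parameter; since $|K|$ may be only $d^{16}$ while we want $\gg d^2$ disjoint projections, we lose only $O(d^2)$ dimensions, which is negligible compared to $t = \Theta(d)$, so this is safe but needs to be stated. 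Everything else is routine Chernoff and union-bound estimates, using $d = \Theta(\log n)$ to absorb the polynomial factors.
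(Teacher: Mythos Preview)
Your proposal has a genuine gap in the final step, and the approach diverges from the paper's in a way that matters. Showing that each big component $K$ of $G_2[W_2]$ merges in $G_{p_3}$ with \emph{some} other big component, even with failure probability $o(1/n)$, only shows that the auxiliary ``merge graph'' on components has no isolated vertices; it does not give connectivity. To get a single cluster one has to rule out \emph{every} bipartition $(\mathcal{A},\mathcal{B})$ of the components, and the number of such bipartitions can be as large as $\binom{n}{\ell}$ for each size $\ell$ of the smaller side. This forces the failure probability for a given bipartition to decay like $\exp(-\Omega(\ell d^2))$, i.e.\ to scale with $\ell$, which your per-component argument does not provide. The paper achieves exactly this scaling by a different mechanism: it uses the (weak) isoperimetric inequality of Theorem~\ref{productiso} together with Lemma~\ref{dense3} to show that any bipartition of $W_2$ extends to a bipartition $(A',B')$ of $V(G)$ with at least $\frac{\ell d^{16}}{2C}$ crossing edges, which are then extended to $\Omega(\ell d^{12})$ edge-disjoint paths of length at most $5$ between $A$ and $B$; sprinkling $G_{p_3}$ along these gives failure probability $\exp(-\Omega(\ell d^2))$, which beats $\binom{n}{\ell}$ since $d=\Theta(\log n)$.

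There is also a real independence problem in your plan. By the time you know the components of $G_2[W_2]$, both $G_{p_1}$ and $G_{p_2}$ have been fully exposed, so you cannot invoke Lemma~\ref{medium comp} afresh inside your projections to discover ``further big components of $G_1$'' independently. One might hope to replace this with the deterministic conclusion of Lemma~\ref{dense1} (applied to $G_1$), but that only tells you vertices of $K$ have many $G$-neighbours in $W_1\subseteq W_2$; there is no reason these neighbours lie in components of $G_2[W_2]$ \emph{other than $K$ itself}, so it does not give edges crossing between different big components. This is precisely why the paper routes the argument through the global isoperimetric cut rather than through local projections.
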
 
\begin{proof}
We first note that every edge of $G$ belongs to $G_2$ independently with probability $1-(1-p_1)(1-p_2)=\frac{1+\epsilon-o(1)}{d}$. Thus, by Lemma \ref{dense3}, \textbf{whp} every $v\in V(G)$ is at distance at most $2$ from a vertex of $W_2$. We assume henceforth that this property holds.

Suppose the statement of the lemma does not hold, then we can partition the components of $G_2[W_2]$ into two families, denote them by $\mathcal{A}$ and $\mathcal{B}$, such that there are no paths in $G_p$ (and hence in $G_{p_3}$) between $\mathcal{A}$ and $\mathcal{B}$. Denote by $s$ the number of components in $\mathcal{A}\cup \mathcal{B}$, and let $\ell\le \frac{s}{2}$ be the number of components in the smaller family. Observe that both $A=\cup_{C\in \mathcal{A}}V(C)$ and $B=\cup_{C\in \mathcal{B}}V(C)$ have at least $\ell d^{16}$ vertices in them.

Now, since every vertex in $G$ is at distance at most $2$ from either $A$ or $B$, we can partition $V(G)$ into two sets $A'$ and $B'$, such that $A'$ contains $A$ and all $v\in V(G)\setminus B$ at distance at most $2$ from $A$, and similarly $B'$ contains $B$ and all $v\in V(G)\setminus A'$ at distance at most $2$ from $B$.

By Theorem \ref{productiso}, $i(G)\ge \frac{1}{2}\min_{j\in [t]}i\left(G^{(j)}\right)$. Since for all $j\in[t]$, $1<|V\left(G^{(j)}\right)|\le C$ and $G^{(j)}$ is connected, we have that $i\left(G^{(j)}\right)\ge\frac{1}{C}$ and thus $i(G)\ge \frac{1}{2C}$ (note that this is the only place where we use our assumption on the connectedness of the base graphs). It follows that there are at least $\frac{\ell d^{16}}{2C}$ edges between $A'$ and $B'$. Now, since every vertex in $A'$ is at distance at most $2$ from $A$, and similarly every vertex in $B'$ is at distance at most $2$ from $B$, we can extend these edges to a family of $\frac{\ell d^{16}}{2C}$ paths of length at most $5$ between $A$ and $B$. Since $G$ is $d$-regular, every edge participates in at most $5d^4$ paths of length at most $5$.

Thus, we can greedily thin this family to a set of $\frac{\ell d^{16}}{2C\cdot 5\cdot 5d^4}=\frac{\ell d^{12}}{50C}$ edge-disjoint paths of length at most $5$ between $A$ and $B$. The probability none of these paths are in $G_{p_3}$ is thus at most
\begin{align*}
    \left(1-p_3^5\right)^{\frac{\ell d^{12}}{50C}}\le \exp\left(-\frac{\ell d^{2}}{50C}\right).
\end{align*}
On the other hand, there are at most $n$ components in $G_2[W_2]$ and hence the number of ways to partition these components into two families with one containing $\ell$ components is at most $\binom{n}{\ell}$. Thus, by the union bound, the probability that the statement does not hold is at most
\begin{align*}
    \sum_{\ell=1}^{\frac{s}{2}}\binom{n}{\ell}\exp\left(-\frac{\ell d^{2}}{50C}\right)\le \sum_{\ell=1}^{\frac{s}{2}}\left[n\exp\left(-\frac{d^{2}}{50C}\right)\right]^{\ell}=o(1),
\end{align*}
since $d=\Theta_C(\log n)$, completing the proof.
\end{proof}

We are now ready to prove Theorem \ref{supercritical regime}:
\begin{proof}[Proof of Theorem \ref{supercritical regime}]
By Lemma \ref{the gap}, \textbf{whp} there are no components in $G_p$ of order larger than $O_{\epsilon}(d)$ which do not intersect with $W_1$. Thus, since every component of $G_p[W_3]$ has size at least $d^{16}$, \textbf{whp} every component of $G_p[W_3]$ meets a component of $G_1[W_1]$, and so (by inclusion) meets a component of $G_2[W_2]$. Hence, by Lemma \ref{all merge}, \textbf{whp} all these components are contained in a single component of $G_p$, which we denote by $L_1$, with $V(L_1)=W_3$. By Lemma \ref{big comp}, we have that \textbf{whp} $|W_3|=\left(1+o(1)\right)yn$. 

Finally, once again by Lemma \ref{the gap}, \textbf{whp} there are no components larger than $O_{\epsilon}(d)$ which do not intersect with $W_1 \subset W_3$, and thus by the above \textbf{whp} there are no components besides $L_1$ of order larger than $O_{\epsilon}(d)$.
\end{proof}

\section{Subcritical regime}\label{S: subcritical}
The proof of Theorem \ref{subcritical regime} is inspired by Krivelevich and Sudakov's \cite{KS13} simple proof of the emergence of a giant component in $G(d+1,p)$. However, here we analyse the BFS algorithm instead of the Depth First Search algorithm. 

\begin{proof}[Proof of Theorem \ref{subcritical regime}]
Suppose we run the BFS algorithm on $G_p$, as described in Section \ref{BFS description}, and assume to the contrary that $G_p$ contains a component $K$ of order larger than $k=\frac{9\log n}{\epsilon^2}$. 

Let us consider the period of the algorithm from the moment when the first vertex of $K$ enters $Q$ to the moment when the $(k+1)$-st vertex of $K$ enters $Q$. During this period we only query edges adjacent to the first $k$ vertices of $K$, and so, since $G$ has maximum degree $d$, we query at most $kd$ edges. However, by assumption $G_p$ induces a tree on these $k+1$ vertices, and hence in this period we receive $k$ positive answers. In particular, there is some interval $I \subseteq \left[m \right]$ of length $kd$ such that at least $k$ of the $X_i$ with $i \in I$ are equal to $1$.

However, by Lemma \ref{chernoff}, the probability that this occurs for any fixed interval $I$ is at most
\begin{align*}
    \mathbb{P}\left[Bin\left(kd,\frac{1-\epsilon}{d}\right)\ge k\right]\le \exp\left(-\frac{\epsilon^2k}{4}\right)=o\left(\frac{1}{n^2}\right),
\end{align*}
where the last equality holds since $k=\frac{9\log n}{\epsilon^2}$. Since $G$ has maximum degree $d$, we have that $m\le \frac{nd}{2}$. Therefore, there are at most $\frac{nd}{2} \leq n^2$ intervals $I \subseteq \left[\frac{nd}{2} \right]$ of length $kd$, and so by a union bound \textbf{whp} there is no such interval, contradicting our assumption.
\end{proof}

\section{Discussion and possible future research directions} \label{discussion}
%Already in \cite{L22}, Lichev asked whether all of the assumptions in Theorem \ref{product graphs} were necessary to ensure a threshold for the appearance of a linear sized component. In particular, he asked whether the bound on the maximum degree of the host graphs could be removed, and whether his (relatively mild) isoperimetric requirement on the base graphs could be furthered relaxed. In a forthcoming paper \cite{DEKK23} we provide a construction answering the former question in the negative, showing that with irregular base graphs, even ones satisfying a relatively strong isoperimetric inequality, the largest component in the supercritical regime can be sublinear in size. However, using some of the tools developed in this paper we are also able to significantly relax the isoperimetric requirements on the base graphs from a polynomial dependence in the dimension to a superexponential one.

%In terms of our Theorem \ref{supercritical regime}, it is also interesting to consider which assumptions are necessary. As mentioned in the introduction, there are examples of products graphs of bounded dimension, for example the $d$-dimensional torus \cite{HH17}, which do not go through a quantitatively similar phase transition to $G(d+1,p)$. 
We have shown that under the assumption that the base graphs are connected, regular, and of bounded order, the percolated product graph $G_p$ exhibits the Erd\H{o}s-R\'enyi phenomenon. 

Let us begin by discussing some of the key steps in the proof of Theorem \ref{supercritical regime}, and their relation to previous results. Given a small set of vertices, the projection lemma (Lemma \ref{seperation lemma}) allows us to find a disjoint set of substructures of `small codimension' which separates them, in the sense that each vertex is contained in a unique substructure. Since our asymptotics are in terms of the dimension of the graph, the percolated substructures display similar properties to $G_p$, and, crucially, they are independently distributed. In a rough sense, this allows us to conclude that there is not much correlation between the local structure around these vertices. This idea is not entirely new and is implicitly used in the classical proofs of the hypercube \cite{AKS81, BKL92}, and in the recent work of Lichev on product graphs \cite{L22}. However, the generality in which it is stated, as well as some novel applications, allows us to provide a streamlined proof which furthermore, as we will soon discuss, works in the absence of a strong isoperimetric inequality.

The first key improvement of Theorem \ref{supercritical regime}, compared with the results of \cite{L22}, is that it recovers the sharp asymptotic order of the giant component. To that end, in Lemma \ref{medium comp} we estimate the probability that a fixed vertex lies in a fairly large component. In the case of the hypercube, this was done in \cite{BKL92} by utilising Harper's isoperimetric inequality \cite{H64}, together with a coupling of the BFS exploration with a Galton-Watson tree. In the proof of Lemma \ref{medium comp}, similarly to \cite{BKL92}, we also couple the BFS exploration with a Galton-Watson tree. Then, however, instead of using any isoperimetric inequality, we inductively utilise the projection lemma. A similar inductive approach was done in the setting of \textit{site percolation} on the hypercube \cite{BKL94}, however, the use of the projection Lemma here allows for a much more compact and streamlined proof. The second key improvement of Theorem \ref{supercritical regime} is in showing that typically the second largest component of $G_p$ is of order $O_{\epsilon}(d)$ (and in this part lies the key innovation in the proof). Utilising the projection lemma, in Lemmas \ref{dense1} and \ref{dense2} we show that it is very unlikely for a set of order $\Omega_{\epsilon}(d)$ to have relatively few neighbours (in $G$) in large components of $G_p$. Then, instead of employing the standard double sprinkling argument (used in \cite{AKS81, BKL92, L22}), we utilise a multi-round sprinkling argument, exposing first the edges inside the set of small components and only then the edges from this set to the rest of the graph. This allows us to show that not only all the large components typically merge, but that they `swallow' any component whose order is $\Omega_{\epsilon}(d)$.

The isoperimetric inequality in Theorem \ref{productiso} is far from optimal in the case of the hypercube. Indeed, whilst Theorem \ref{productiso} is of the right order of asymptotics for linear-sized sets, which can have constant edge-expansion, a well-known result of Harper \cite{H64} implies that smaller sets in $Q^d$ have almost optimal edge-expansion, expanding by a factor of almost $d$. As mentioned above, given such a strong isoperimetric inequality, it is relatively simple to show the existence of a gap in the sizes of the components in $Q^d_p$ using a first-moment argument, which implies any component of polynomial (in $d$) order must in fact have order $O_{\epsilon}(d)$ (see \cite{BKL92}, and also \cite{K23} for a simple proof for the phase transition in $Q^d_p$). Utilising the product structure of the graph, our proof allows us to use a much weaker isoperimetric inequality. We believe these methods could be useful in other contexts where the analysis is constrained by the lack of a strong enough isoperimetric inequality. Indeed, similar arguments have proven to be useful in the setting of site percolation on the hypercube, where the lack of a strong enough vertex isoperimetric inequality complicates the analysis of the component structure. Recently, the first and fourth author \cite{DK22} used similar ideas to verify a longstanding conjecture of Bollob\'as, Kohayakawa, and \L{}uczak \mbox{\cite[Conjecture 11]{BKL94}} on the size of the second-largest component in the supercritical regime in this model. Furthermore, in a subsequent work, the authors \cite{DEKK24b} show that for $d$-regular graphs, where $d=\omega(1)$, a fairly mild isoperimetric assumption on $G$ suffices for phase transition in $G_p$ for the typical emergence of a giant component.

As mentioned in the introduction, Lichev \cite{L22} showed the emergence of a linear sized component around $p=\frac{1}{d}$, where he did not assume regularity or bounded order, but only some mild isoperimetric assumption on the base graphs. In a companion paper, \cite{DEKK23} we give an example to show that the regularity assumption is necessary to a certain extent --- if we take our base graphs to be stars, and so quite irregular, then the component behaviour can be quite different, with polynomially sized components appearing already in the subcritical regime. We also show that if one further allows some of the base graphs to be of unbounded order, then there are product graphs (even satisfying a relatively strong isoperimetric inequality), in which typically a largest component in the supercritical regime is of sublinear order.  Furthermore, using some of the tools developed in this paper we are also able to significantly relax the isoperimetric requirements on the base graphs from a polynomial dependence in the dimension to a superexponential one. It remains an interesting open question as to whether we can relax the assumption that the base graphs have bounded size to that of just bounded regularity, or even weaker to some assumption of \emph{almost-regularity} of the product graph, under some mild isoperimetric assumptions.
\begin{question}
Let $G$ be a high-dimensional product graph all of whose base graphs are connected and let $d$ be the average degree of $G$. Let $\epsilon > 0$ and let $p=\frac{1+\epsilon}{d}$. What assumptions on degree distributions and the isoperimetric constants of the base graphs are sufficient to guarantee that $G_p$ exhibits the Erd\H{o}s-R\'{e}nyi component phenomenon?
\end{question}
More generally, it would be interesting to investigate further which other classes of graph exhibit the Erd\H{o}s-R\'enyi component phenomenon under percolation and to find the limits of the universality of this phenomenon.

Finally, whilst the Cartesian product is perhaps a natural product in the context of percolation, there are many other types of graphs products, such as strong products and tensor products, and it would be interesting to know if `high-dimensional' graphs with respect to these types of products also exhibit similar behaviour under percolation. As a concrete example, since the $d$-fold tensor product of a single edge is disconnected and the $d$-fold strong product of a single edge is complete, it is perhaps interesting to consider percolation in the tensor and strong products of many small cycles. Let us write $T(k,d)$ for the $d$-fold tensor product of the cycle $C_k$ and similarly $S(k,d)$ for the $d$-fold strong product. Note that $T(k,d)$ is $2^d$-regular and $S(k,d)$ is $2^d + 2d$ regular. 
\begin{question}
Do the percolated subgraphs $T(3,d)_p$ and $S(3,d)_p$ exhibit the Erd\H{o}s-R\'{e}nyi component phenomenon at the critical point $p=2^{-d}$?
\end{question}

\paragraph{Acknowledgement} The authors thank the anonymous referees for their helpful comments that improved the quality of the manuscript. The second and third authors were supported in part by the Austrian Science Fund (FWF) [10.55776/{\text{P36131, W1230, I6502}]. The fourth author was supported in part by USA–Israel BSF grant 2018267. For the purpose of open access, the authors have applied a CC-BY public copyright licence to any Author Accepted Manuscript version arising from this submission.
\bibliographystyle{abbrv}
\bibliography{perc}
\end{document}